\documentclass[10pt, reqno]{amsart}
\usepackage{amssymb,latexsym}
\usepackage{eucal}
\usepackage{tikz}
\numberwithin{equation}{section}

\newtheorem{thm}{Theorem}[section]
\newtheorem{cor}[thm]{Corollary}
\newtheorem{lem}[thm]{Lemma}

\newcommand{\p}{\partial}
\newcommand{\e}{\varepsilon}

\newcommand{\A}{\mathcal{A}}
\newcommand{\FF}{\mathcal{F}}
\newcommand{\ph}{\varphi}

\newcommand{\B}{\mathcal{B}}

\newcommand{\R}{\mathcal{R}}
\newcommand{\PP}{\mathcal{P}}
\newcommand{\W}{\mathcal{W}}
\newcommand{\U}{\mathcal{U}}

\newcommand{\wtl}{\widetilde}

\newcommand{\trn}{operation}
\newcommand{\Etrn}{EN-operation}
\newcommand{\CEtrn}{CEN-operation}
\newcommand{\crd}{cyclically reduced}

\begin{document}

\title[On Conjectures of  Andrews and Curtis]
{On Conjectures of  Andrews and Curtis}
\author{S. V. Ivanov}
\address{Department of Mathematics\\
University of Illinois \\
Urbana\\   IL 61801\\ U.S.A.}
 \email{ivanov@illinois.edu}
\thanks{Supported in part by the National Science Foundation,
  grant DMS  09-01782.}
\subjclass[2010]{Primary 20F05, 20F06, 57M20.}

\begin{abstract} It is shown that the original Andrews--Curtis
conjecture on balanced presentations of the trivial group is equivalent to its ``cyclic" version in
which, in place of arbitrary conjugations, one can use only cyclic
permutations. This, in particular, proves a satellite conjecture of
 Andrews and Curtis \cite{AC66}  made in 1966. We also consider a more restrictive
 ``cancellative" version of the cyclic Andrews--Curtis conjecture with and without stabilizations and show that the restriction does not change the Andrews--Curtis conjecture when stabilizations are allowed. On the other hand,  the restriction  makes the conjecture false  when stabilizations are not allowed.
\end{abstract}
\maketitle

\section{Introduction}

In 1965, Andrews and Curtis \cite{AC65}  put forward a conjecture
on balanced presentations of the trivial group  and indicated some interesting topological consequences of their conjecture related to the 4-dimensional and 3-dimensional Poincar\'e conjectures. Since then both the 4-dimensional and 3-dimensional Poincar\'e conjectures have been established, however,  the Andrews--Curtis conjecture remains unsettled and has become one of the most notorious  hypotheses in group theory
and low-dimensional topology. In this paper, we  show that the  Andrews--Curtis
conjecture is equivalent to its more restrictive ``cyclic" version in
which, in place of arbitrary conjugations, one can use only cyclic
permutations. This, in particular,  proves a satellite conjecture of
 Andrews and Curtis \cite[Conjecture~3]{AC66} made in 1966.

\medskip

Let $\A = \{ a_1, \dots, a_m \}$ be an
alphabet, $\A^{-1} := \{ a_1^{-1}, \dots, a_m^{- 1} \}$, where $ a_i^{-1}$ is the inverse
of a letter $a_i \in \A$, $\A^{\pm 1} := \A \cup \A^{-1}$, and $\FF(\A)$ denote the free group
over $\A$ whose nontrivial elements are considered as reduced words over $\A^{\pm 1}$.

Let $\W = (W_1, \dots, W_n)$ be an $n$-tuple of elements of  $\FF(\A)$.
Recall  that {\em Nielsen \trn s} over $\W$  have two types and are defined as follows.
\begin{enumerate}
\item[(T1)]  For some $i$, $W_i$  is replaced with its inverse $W_i^{-1}$.

\item[(T2)]   For some pair of distinct indices $i$ and $ j$, $W_i$ is replaced with $W$, where $W=  W_i W_j$ in $\FF(\A)$.
\end{enumerate}

Consider  \trn s of a  third type so that
\begin{enumerate}
\item[(T3)] For some $i$,  $W_i$ is replaced with a word $W$ such that
$W_i$ and $W$ are conjugate in $\FF(\A)$, i.e.,  $W = S W_i S^{-1}$  in $\FF(\A)$ for some  $S  \in \FF(\A)$.
\end{enumerate}

Similarly to \cite{AC65}, \cite{AC66},  \trn  s  (T1)--(T3) are called extended Nielsen \trn s, or briefly
{\em EN-operations}.

The  Andrews--Curtis conjecture  \cite{AC65}, \cite{AC66}, abbreviated as the {\em AC-conjecture},  see also \cite{BM98}, \cite{HW93}, \cite{Lu},  \cite{MMS02},
states that, {\em for every balanced group presentation}
\begin{equation}\label{e1}
\PP = \langle   \,  a_1,  \dots, a_m  \,  \|   \, R_1 , \dots, R_m  \, \rangle
\end{equation}
{\em that defines the trivial group,  the $m$-tuple $\R = (R_1,
\dots, R_m)$ of defining words  $R_1 , \dots, R_m$   can be brought to the letter
tuple $(a_1, \dots, a_m)$ by a finite sequence of \trn  s   (T1)--(T3)}.
\smallskip

Extending the terminology by dropping the quantifier,  we will say that the AC-conjecture
{ holds} for a  balanced group presentation \eqref{e1} if  the $m$-tuple
$\R= (R_1, \dots, R_m)$   can be brought to the letter
tuple $(a_1, \dots, a_m)$ by a finite sequence of \trn  s  (T1)--(T3).
\smallskip

Consider a different, ``cyclic", version of  \trn\   of type (T3) so
that
\begin{enumerate}
\item[(T3C)] For some $i$, $W_i$ is replaced  with  a cyclic permutation $\bar W_i$  of  $W_i$.
\end{enumerate}

A satellite hypothesis, made by Andrews and Curtis \cite[Conjecture~3]{AC66}
regarding their main conjecture, claims for  $m=2$ that if a pair  $\R =(R_1, R_2)$ of words can be transformed into  $(a_1,  a_2)$ by a finite sequence of operations  (T1)--(T3),
then this can also be done by a finite sequence of  operations (T1), (T2), (T3C).
More informally, one could say that arbitrary conjugations in the AC-conjecture could be replaced with  cyclic permutations. In this paper, we confirm this hypothesis by proving a more general result for all
$m \ge 2$ which is the equivalence of the AC-conjecture to what we call a cyclic version of the AC-conjecture.

Let $\W = (W_1, \dots, W_n)$ be  a tuple  of words over $\A^{\pm 1}$ such that every $W_i$ in $\W$ is either cyclically reduced, i.e., every cyclic permutation of $W_i$ is reduced, or empty.
We call such a tuple $\W$  {\em cyclically reduced}. Consider the following operations over  cyclically reduced  tuples.

\begin{enumerate}
\item[(CT1)] For some $i$, $W_i$ is replaced with $W_i^{-1}$.

\item[(CT2)]  For some pair of distinct indices $i$ and $j$, $W_i$ is  replaced with $W$, where $W$ is a cyclically reduced or empty word obtained from the product $W_i W_j$ by making cancellations and cyclic cancellations.
\end{enumerate}

\begin{enumerate}
\item[(CT3)] For some $i$,  $W_i$ is replaced with a cyclic permutation $\bar W_i$  of  $W_i$.
\end{enumerate}

Such redefined \trn s  of type (CT1)--(CT3)  are called {\em  cyclically extended Nielsen \trn s}, or, briefly, {\em \CEtrn s}. Thus, in place of  arbitrary conjugations, we can
use only cyclic permutations and we deal with cyclically reduced tuples only.

Now the cyclic version of the Andrews--Curtis conjecture,  abbreviated as {\em CAC-conjecture},  claims that,  {\em for every presentation \eqref{e1} such that \eqref{e1} defines the trivial
group and $\R = (R_1, \dots, R_m)$ is \crd ,  the $m$-tuple   $\R$ can be brought
to the letter tuple $(a_1, \dots, a_m)$ by a finite sequence of \trn s (CT1)--(CT3)}.

\smallskip

As above, we will say that the  CAC-conjecture { holds} for a  balanced group presentation
\eqref{e1} if  the     $m$-tuple
$\R= (R_1, \dots, R_m) $ is \crd\   and  can be brought to the letter
tuple $(a_1, \dots, a_m)$ by a finite sequence of \trn  s  (CT1)--(CT3).
\smallskip

The main technical result of this paper is the following.

\begin{thm}\label{t1} Suppose that  a balanced presentation \eqref{e1}  defines the trivial
group, the $m$-tuple $\R = (R_1, \dots, R_m)$ is \crd\   and the original  Andrews--Curtis conjecture holds true for all balanced presentations in ranks  $< m$.  If the original  Andrews--Curtis conjecture holds true for the balanced presentation \eqref{e1}, then the  cyclic version of the Andrews--Curtis conjecture also  holds for \eqref{e1}.
\end{thm}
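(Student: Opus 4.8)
The plan is to start from a sequence of \Etrn s witnessing the AC-conjecture for \eqref{e1} and convert it, step by step, into a sequence of \CEtrn s, using the lower-rank hypothesis to get past the single genuine obstruction. Two preliminary observations and one normalisation set the stage. First, just as \Etrn s do, \CEtrn s preserve the normal closure of the tuple: each of (CT1)--(CT3) replaces an entry by a conjugate of a word lying in the normal closure of the old entries, and conversely the old entries lie in the normal closure of the new ones; hence every tuple occurring in a \CEtrn-sequence issuing from $\R$ still normally generates $\FF(\A)$. Second, arguing by induction on $m$: the hypotheses of the theorem persist in every rank below $m$, so the CAC-conjecture may be assumed to hold for all balanced presentations of rank $<m$. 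Finally, since a conjugation $W_i\mapsto SW_iS^{-1}$ with $S=s_1\cdots s_\ell$ factors into $\ell$ conjugations by single letters, I may assume every operation of type (T3) in the given \Etrn-sequence $\R=\W_0\to\W_1\to\dots\to\W_N=(a_1,\dots,a_m)$ conjugates by a single letter of $\A^{\pm1}$.

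The core device is the passage to \emph{cyclic shadows}. For each $k$ let $\W_k^{\,c}$ be the \crd\ tuple obtained by cyclically reducing each entry of $\W_k$; then $\W_0^{\,c}=\R$ and $\W_N^{\,c}=(a_1,\dots,a_m)$. If $\W_{k+1}$ comes from $\W_k$ by a (T1), then $\W_{k+1}^{\,c}$ comes from $\W_k^{\,c}$ by a (CT1); if by a (T3), then, since the cyclic reduction of a conjugate of a \crd\ word is a cyclic permutation of that word, $\W_{k+1}^{\,c}$ comes from $\W_k^{\,c}$ by a (CT3). The one problematic case is a (T2), $W_{k,i}\mapsto W_{k,i}W_{k,j}$: writing the $i$-th and $j$-th entries of $\W_k$ as $T_iC_iT_i^{-1}$ and $T_jC_jT_j^{-1}$ with $C_i,C_j$ \crd, cyclic reduction turns the move into the \emph{conjugacy-twisted} replacement of $C_i$ by the cyclic reduction of $C_i\,h\,C_jh^{-1}$, where $h=T_i^{-1}T_j$, and this is in general not a \CEtrn. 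Thus the whole problem is reduced to the following: realize each such twisted move by an honest finite sequence of \CEtrn s acting on the \crd\ tuple $\W_k^{\,c}$ (whose entries still normally generate $\FF(\A)$) and changing the entries other than the $i$-th only by cyclic permutations.

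When $h$ lies in a suitable subgroup generated by the current entries this is routine: one expresses $h$ in terms of those entries and their inverses and assembles $C_i\,h\,C_jh^{-1}$ inside the $i$-th slot by successive (CT2)'s, using (CT1) to handle signs and (CT3) to rotate the participating words so that the desired cancellations do or do not occur (there is room since $m\ge2$). In general $h$ need not be so placed, and it is precisely here that the lower-rank AC-conjecture is indispensable: the idea is to trade the offending conjugation for a sub-problem of strictly smaller rank, namely, using \CEtrn s one brings some coordinate of the tuple to a generator, freezes it, discards that generator, transfers the residual conjugation-and-multiplication problem to a balanced presentation of rank $m-1$, solves it there by the inductive form of the CAC-conjecture, and lifts the resulting \CEtrn-sequence back. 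Splicing together the \CEtrn-sequences produced for the successive steps $\W_k^{\,c}\to\W_{k+1}^{\,c}$ then yields a \CEtrn-reduction of $\R$ to $(a_1,\dots,a_m)$, which is the CAC-conjecture for \eqref{e1}.

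I expect the conjugacy-twisted (T2)-step to be the main obstacle. The twist $h$ can a priori be an arbitrary element of $\FF(\A)$, so controlling it, setting up the descent to rank $m-1$, and carrying out that descent and the subsequent lift entirely within \crd\ tuples --- without corrupting the bookkeeping of the coordinates that must stay put --- is where essentially all of the work of the proof will lie.
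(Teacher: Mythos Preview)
Your setup --- passing to cyclically reduced ``shadows'' $\W_k^{\,c}$ and observing that (T1) and (T3) lift trivially to \CEtrn s --- is exactly right and matches the paper. The difficulty is indeed the (T2) step, and you have correctly identified that what must be realized is a move of the shape $C_i\mapsto$ (cyclic reduction of $C_i\,h\,C_j\,h^{-1}$) for an arbitrary $h\in\FF(\A)$.

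However, your proposed attack on this step has a genuine gap. You write that one ``brings some coordinate of the tuple to a generator'' by \CEtrn s and then descends to rank $m-1$. But reducing a coordinate of an arbitrary \crd\ normally-generating tuple to a single letter by \CEtrn s is precisely the CAC-conjecture you are trying to prove; if this were always possible, the theorem would follow by a trivial induction, with no use of the AC-hypothesis for the given presentation. You give no mechanism for achieving this reduction, and there is no reason to expect one in general. Likewise, your ``routine'' case --- when $h$ lies in the subgroup generated by the current entries --- is not actually routine: each application of (CT2) forces a cyclic reduction, so building up $C_i\,h\,C_j\,h^{-1}$ by successive (CT2)'s will typically destroy the intermediate words before you can use them.

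The paper's argument is structurally different. The lower-rank hypothesis is invoked \emph{only} in a degenerate situation (Lemma~\ref{Lem1}): if some letter $a_j$ occurs exactly once in some $U_i,U_i^{-1}$, then one can legitimately eliminate $a_j$ and drop to rank $m-1$. Once this case is excluded, every letter occurs at least twice in every $U_i,U_i^{-1}$, and the paper handles the conjugation $C_i\mapsto C_i\,h\,C_j\,h^{-1}$ \emph{directly}, with no further descent in rank. The key device is to first enrich $U_1$ by ``simple 1-insertions'' so that for every letter $a$ there are two distinct successors $ab,ac$ in $U_1,U_1^{-1}$ (property~(Q), Lemma~\ref{Lem3}); this guarantees that at each stage of an induction on $|h|$ one can choose a cyclic permutation of $U_1^{\pm1}$ that avoids an unwanted cancellation. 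The induction step (Lemma~\ref{Lem4}) then hinges on Magnus's Freiheitssatz for the one-relator group $\langle\A\mid U_1\rangle$ to control the residual cancellations. None of this appears in your sketch, and it is where the actual content lies.
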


As easy consequences of Theorem~\ref{t1} we will obtain the following three corollaries.

\begin{cor}\label{c1} Let $r \ge 2$ be an integer. The  original  Andrews--Curtis conjecture is true for all balanced presentations in ranks $\le r$
if and only if the cyclic version of the Andrews--Curtis
conjecture is true for all balanced presentations in ranks $\le r$.
\end{cor}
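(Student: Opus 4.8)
The plan is to deduce both implications of Corollary~\ref{c1} from Theorem~\ref{t1}: the forward implication will be an immediate application of Theorem~\ref{t1}, and the backward implication will follow from the elementary observation that every \CEtrn\ can be imitated by \Etrn s.

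For the ``only if'' direction I would assume that the original AC-conjecture holds for all balanced presentations in ranks $\le r$, fix $m\le r$, and take an arbitrary balanced presentation \eqref{e1} of the trivial group whose defining tuple $\R=(R_1,\dots,R_m)$ is \crd . Since $m\le r$, the AC-conjecture holds for \eqref{e1} itself by assumption, and since every rank $<m$ is also $\le r$, the AC-conjecture holds for all balanced presentations in ranks $<m$. Thus all hypotheses of Theorem~\ref{t1} are met, and Theorem~\ref{t1} gives that the CAC-conjecture holds for \eqref{e1}. As \eqref{e1} ranges over all balanced presentations of the trivial group of rank $\le r$ with \crd\ defining tuple, this yields the CAC-conjecture in all ranks $\le r$; no separate induction is needed, since the hypothesis already supplies everything Theorem~\ref{t1} asks for.

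For the ``if'' direction I would assume that the CAC-conjecture holds for all balanced presentations in ranks $\le r$, fix $m\le r$, and let \eqref{e1} be a balanced presentation of the trivial group, now with $\R$ not necessarily \crd . Writing $R_i=S_iR_i'S_i^{-1}$ in $\FF(\A)$ with each $R_i'$ \crd , the replacements $R_i\mapsto R_i'$ are \trn s of type (T3), so a finite sequence of \Etrn s takes $\R$ to the \crd\ tuple $\R'=(R_1',\dots,R_m')$; since \eqref{e1} defines the trivial group, no $R_i'$ is empty, and $\langle a_1,\dots,a_m \mid R_1',\dots,R_m'\rangle$ is again a balanced presentation of the trivial group in rank $m\le r$ with \crd\ defining tuple. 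By hypothesis the CAC-conjecture holds for it, so $\R'$ can be brought to $(a_1,\dots,a_m)$ by a finite sequence of \CEtrn s. Finally, each \CEtrn\ is realizable by \Etrn s: (CT1) is literally (T1); (CT3), sending $W_i=uv$ to $\bar W_i=vu$, is the conjugation $W_i\mapsto u^{-1}W_iu$ and hence an instance of (T3); and (CT2), replacing $W_i$ by the \crd\ or empty word $W$ obtained from $W_iW_j$ by cancellations and cyclic cancellations, is obtained by first applying (T2) to get the reduced word representing $W_iW_j$ in $\FF(\A)$ and then applying (T3), because $W$ is conjugate to $W_iW_j$ in $\FF(\A)$ (a cyclic cancellation $xZx^{-1}\mapsto Z$ replaces a word by its conjugate). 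Concatenating the \Etrn\ sequence obtained this way with the one taking $\R$ to $\R'$ shows the AC-conjecture holds for \eqref{e1}, and letting \eqref{e1} vary gives the AC-conjecture in all ranks $\le r$.

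I expect no genuine obstacle here: the substantive content of the equivalence is entirely carried by Theorem~\ref{t1}. The only point requiring a small amount of care is the reduction in the last paragraph — namely, checking that a word obtained by free and cyclic cancellations from $W_iW_j$ is genuinely a conjugate of $W_iW_j$, so that (CT2) factors through (T2) and (T3), and noting that cyclically reducing each $R_i$ is itself a type-(T3) move. Both are routine.
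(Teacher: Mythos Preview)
Your proposal is correct and follows essentially the same route as the paper's proof: apply Theorem~\ref{t1} for the forward direction (the paper phrases this as an induction on $m$, but as you rightly note no genuine induction is needed since the hypothesis already gives the AC-conjecture in all ranks $<m$), and for the converse cyclically reduce the relators via (T3) and then observe that each \CEtrn\ is a composition of \Etrn s. Your added details---that the cyclically reduced relators are nonempty and that (CT2) factors through (T2) followed by (T3) because cyclic cancellation is conjugation---are accurate and match what the paper leaves implicit.
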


\begin{cor}\label{c2} The original  Andrews--Curtis conjecture holds true
if and only if the cyclic version of the Andrews--Curtis
conjecture holds true.
\end{cor}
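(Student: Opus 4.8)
\emph{Proof proposal for Corollary~\ref{c2}.} The plan is to prove the two implications separately; the substantive content is already carried by Theorem~\ref{t1} (equivalently, by Corollary~\ref{c1}), so only a little bookkeeping remains. For the implication that the CAC-conjecture implies the AC-conjecture, I would first check that every \CEtrn\ can be realized by a finite sequence of \Etrn s: an operation (CT1) is literally (T1); an operation (CT3), which replaces $W_i$ by a cyclic permutation $\bar W_i = S^{-1} W_i S$ for a suitable prefix $S$ of $W_i$, is a special case of (T3); and an operation (CT2), which replaces $W_i$ by the \crd\ or empty word $W$ obtained from $W_i W_j$ by cancellations and cyclic cancellations, is the (T2)-step $W_i \mapsto W_i W_j$ followed by the (T3)-step that conjugates $W_i W_j$ to $W$. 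Now let \eqref{e1} be an arbitrary balanced presentation of the trivial group. Applying (T3)-operations coordinatewise, conjugate each $R_i$ to a cyclically reduced or empty word; this yields a \crd\ tuple $\R'$ which, being obtained from $\R$ by conjugations, still defines the trivial group in the same rank $m$. If the CAC-conjecture holds, then $\R'$ can be carried to $(a_1, \dots, a_m)$ by \CEtrn s, hence by \Etrn s; prepending the conjugations above shows the AC-conjecture holds for \eqref{e1}. Since \eqref{e1} was arbitrary, the CAC-conjecture implies the AC-conjecture.

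For the reverse implication, suppose the AC-conjecture holds, and let \eqref{e1} be any balanced presentation of the trivial group whose relator tuple $\R$ is \crd, of some rank $m$. Since the AC-conjecture is assumed to hold for \emph{all} balanced presentations, it holds in particular for all balanced presentations in ranks $< m$ and for \eqref{e1} itself; thus every hypothesis of Theorem~\ref{t1} is met, and Theorem~\ref{t1} yields that the CAC-conjecture holds for \eqref{e1}. (When $m = 1$ the condition on ranks $< m$ is vacuous, so nothing special is needed; alternatively, a rank-$1$ balanced presentation of the trivial group has relator $a_1^{\pm 1}$, and one (CT1) finishes the job.) As \eqref{e1} ranges over all balanced presentations of the trivial group with \crd\ relator tuple, the CAC-conjecture holds.

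I do not expect a genuine obstacle here: Theorem~\ref{t1} does all the work, and the only points that need (routine) care are the verification that each \CEtrn\ decomposes into \Etrn s and the observation that reducing a relator tuple to \crd\ form by conjugations changes neither the rank nor the presented group. One may also phrase the whole argument as a limiting case of Corollary~\ref{c1}: ``the AC-conjecture holds'' is equivalent to ``the AC-conjecture is true in all ranks $\le r$ for every $r \ge 2$'' (rank $1$ being immediate), and likewise for the CAC-conjecture, whence the equivalence follows from Corollary~\ref{c1} by ranging over all $r \ge 2$.
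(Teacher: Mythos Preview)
Your proposal is correct and matches the paper's approach: the paper's own proof is the one-liner ``straightforward from Corollary~\ref{c1}'', which is exactly the alternative phrasing you give in your last paragraph. Your more detailed direct argument (checking that each \CEtrn\ decomposes into \Etrn s, cyclically reducing the relator tuple via (T3), then invoking Theorem~\ref{t1}) simply unfolds the proof of Corollary~\ref{c1}, so there is no substantive difference.
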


\begin{cor}\label{c3} A satellite hypothesis of Andrews and Curtis \cite[Conjecture~3]{AC66}
holds true. This hypothesis claims for $m=2$ that if  $\R =(R_1, R_2)$ can be brought
to  $(a_1,  a_2)$ by a finite sequence of  operations  (T1)--(T3), then this result can also be achieved by  operations  (T1), (T2), (T3C).

More generally, if  $r \ge 2$ is an integer and  every  $m$-tuple $\R =(R_1, \dots, R_m)$,
where $2 \le m \le r$,
that defines the trivial group by \eqref{e1},  can be transformed to  $(a_1, \dots, a_m)$ by a finite sequence of  operations  (T1)--(T3), then
such  transformation  can also be done by  operations  (T1), (T2), (T3C).
\end{cor}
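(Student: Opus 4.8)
The plan is to reduce an arbitrary AC-sequence for $\PP$ to a CAC-sequence in two stages. First, I would convert the given sequence of EN-operations (T1)–(T3) bringing $\R$ to $(a_1,\dots,a_m)$ into one that uses only cyclic conjugations, at the cost of leaving the intermediate tuples only ``morally'' cyclically reduced; second, I would show this can be done honestly on cyclically reduced tuples using CEN-operations, invoking the rank-$<m$ hypothesis to clean up. The key observation is that a (T3)-move $W_i \mapsto SW_iS^{-1}$ changes the conjugacy class not at all, and changes the cyclic word represented by $W_i$ not at all; so at the level of \emph{cyclic words} the operations (T1), (T2), (T3) and (CT1), (CT2), (CT3) do the same thing. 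The content of the theorem is therefore entirely about controlling the non-cyclic ``tails'' that a genuine (T2)-move $W_i \mapsto W_iW_j$ can create, and about the fact that the target $(a_1,\dots,a_m)$ is already cyclically reduced.

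The main steps, in order, would be: (1) Given the AC-sequence $\R = \W^{(0)} \to \W^{(1)} \to \cdots \to \W^{(N)} = (a_1,\dots,a_m)$, define for each $\W^{(k)}$ its cyclic reduction $c(\W^{(k)})$, the tuple of cyclically reduced words obtained by cyclically reducing each coordinate; note $c(\W^{(0)}) = \R$ and $c(\W^{(N)}) = (a_1,\dots,a_m)$ since both are already cyclically reduced. (2) Show that each single EN-operation $\W^{(k)} \to \W^{(k+1)}$ is ``tracked'' by a finite sequence of CEN-operations $c(\W^{(k)}) \to \cdots \to c(\W^{(k+1)})$: for (T1) and (T3) this is immediate (a (T1) is a (CT1), a (T3) does nothing to the cyclic reduction, so zero moves), and for (T2), $W_i \mapsto W_iW_j$, one must realize the passage from $c(W_i)$ to $c(W_iW_j)$ by a (CT2) — i.e. by forming the product of $c(W_i)$ with $c(W_j)$, possibly after first conjugating (cyclically permuting via (CT3)) $c(W_j)$ so that the cancellation pattern matches. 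Here the subtlety is that $W_i$ and $W_j$ in $\W^{(k)}$ are specific conjugates of their cyclic reductions, and the product $W_iW_j$ may cancel in ways not visible to $c(W_i)c(W_j)$; the fix is to allow a bounded number of (CT3)-rotations and to absorb the discrepancy. (3) Concatenate these tracking sequences over $k = 0, \dots, N-1$ to get a CEN-sequence from $\R$ to $(a_1,\dots,a_m)$.

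I expect the main obstacle to be step~(2) for the (T2)-operation, and this is where the induction on rank enters. The issue is that when $W_i = SU_iS^{-1}$ and $W_j = TU_jT^{-1}$ with $U_i, U_j$ cyclically reduced, the cyclic reduction of the product $SU_iS^{-1}TU_jT^{-1}$ is \emph{not} in general obtainable from $U_i$ and $U_j$ by a single (CT2) plus rotations — the conjugators $S, T$ genuinely interact. One cannot simply ignore them, because subsequent operations in the sequence may depend on the precise (non-cyclically-reduced) forms. The resolution I anticipate is to not track the sequence move-by-move but to argue more globally: use the hypothesis that AC holds in all ranks $< m$ to handle any sub-tuple that becomes ``trivial enough'' (e.g. once some coordinate is carried to a single letter $a_\ell$, the remaining coordinates, after Tietze-style elimination of $a_\ell$, form a balanced presentation of rank $m-1$ to which the inductive CAC-equivalence — Corollary-style consequence of the theorem in lower rank — applies). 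The delicate bookkeeping will be to show the process of peeling off letters one at a time terminates and that each peeling step is itself realizable by CEN-operations; this is where most of the real work of the proof will lie, and where the cyclic reducedness of both endpoints is used crucially to get the induction off the ground.
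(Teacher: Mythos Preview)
Your overall architecture---track the cyclic reductions $c(\W^{(k)})$ through the given EN-sequence, handle (T1) and (T3) trivially, and concentrate on (T2)---is exactly the scaffold the paper uses for Theorem~\ref{t1}, and you have correctly located the real difficulty: when $W_i=SU_iS^{-1}$ and $W_j=TU_jT^{-1}$, the cyclic reduction of $W_iW_j$ need not be reachable from $U_i,U_j$ by a single (CT2) plus rotations. But your proposed resolution is where the gap lies.

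You suggest abandoning the move-by-move tracking in favor of a ``global'' argument that waits for some coordinate to become a single letter and then peels it off via the rank~$<m$ hypothesis. This will not work as stated: nothing in an arbitrary EN-sequence forces any coordinate to become a single letter before the very last step, so there is no intermediate moment at which your peeling applies. The paper does \emph{not} abandon the move-by-move approach; it pushes it through with substantial additional machinery that your proposal does not anticipate. Specifically: (i) the rank~$<m$ hypothesis is used \emph{locally}, not globally---it is invoked (Lemma~\ref{Lem1}) to dispose of the case where some $a_j$ occurs exactly once in some $U_i,U_i^{-1}$, so one may assume every letter occurs at least twice in every word; (ii) a diagram argument (Lemma~\ref{Lem2}) classifies the cyclic reduction of $U_1U_2$ into four geometric forms (F1)--(F4), three of which are handled by a single (CT2)+(CT3); (iii) the remaining hard case (F1), where the target is $U_1CU_2C^{-1}$ with $|C|>0$, is handled by first modifying $U_1$ via ``simple $1$-insertions'' so that it acquires a branching property (Q) (Lemma~\ref{Lem3}), and then running an induction on $|C|$ (Lemma~\ref{Lem4}) whose inductive step relies on Magnus's Freiheitssatz for the one-relator group $\langle \A\mid U_1\rangle$ to control cancellations. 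None of these ingredients---the four-case classification, simple $1$-insertions, property (Q), or the appeal to the Freiheitssatz---appears in your plan, and I do not see how to complete the (T2) step without something of comparable strength.

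A secondary point: the statement you were asked to prove is Corollary~\ref{c3}, not Theorem~\ref{t1}. In the paper, Corollary~\ref{c3} is a short deduction from Theorem~\ref{t1} (via Corollaries~\ref{c1}--\ref{c2}): cyclically reduce $\R$ by (T3C), apply the CAC-conclusion, and note that each (CT1)--(CT3) is a composition of (T1), (T2), (T3C). Attacking Theorem~\ref{t1} directly is the right instinct since that is where the content is, but you should also record this final reduction.
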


Recall that there is another, more general, version of the
AC-conjecture, called the {\em AC-conjecture with stabilizations},
see   \cite{BM98}, \cite{HW93},  \cite{MMS02}, in which a fourth type of \trn s,
called {\em stabilizations}, is allowed.
\begin{enumerate}
\item[(T4)]  Add (or remove) a new letter $b$, $b \not\in \A^{\pm 1}$,  both to the alphabet $\A$ and to the tuple $\R$ of defining words (when removing, $b$ and $b^{-1}$ may not occur in all other words of $\R$).
\end{enumerate}

The AC-conjecture with stabilizations has a nice geometric
interpretation due to Wright \cite{W}: The AC-conjecture with stabilizations
is equivalent to the conjecture that every finite contractible
2-complex can be 3-deformed into a point.
Putting this result together with
the  Perelman's proof \cite{P1}, \cite{P2}, \cite{P3}  of the 3-dimensional Poincar\'e conjecture,
one can see that the AC-conjecture with stabilizations is equivalent to the claim that
every finite contractible 2-complex can be 3-deformed into a spine of a  closed 3-manifold.  Further generalizations of the AC-conjecture (with or without stabilizations), motivated by a problem of Magnus's on balanced presentations of the trivial group, can be found in \cite{I06}.   Generalization of the original AC-conjecture  in quite different direction was investigated (and proved!) by Myasnikov \cite{Ma} for solvable groups and by  Borovik, Lubotzky and Myasnikov \cite{BLM} for finite groups. In particular, it follows from results of \cite{Ma}, \cite{BLM} that the natural idea to use a solvable or finite quotient of  $F(\A)$ to construct a counterexample to the AC-conjecture will necessarily fail.

It was earlier shown by the author \cite{I05} that the
AC-conjecture with stabilizations holds for a presentation \eqref{e1}
if and only if the CAC-conjecture with stabilizations
holds for \eqref{e1} (the definition of the CAC-conjecture with stabilizations is analogous and uses operations (CT4) over cyclically reduced tuples in place of (T4)). The availability of stabilizations provides substantial aid in simulating required  conjugations by compositions of cyclic permutations with operations  (CT1)--(CT2). Such a simulation does not seem to be possible when stabilizations are not available. In particular,  we are not able to prove  the equivalence of the AC-conjecture
to its cyclic version for a given presentation (which would be similar to the result of \cite{I05}  for  AC-conjecture with stabilizations). We are only able to prove a much weaker result, namely, that the absence
of a counterexample to the AC-conjecture in rank $< m$     implies
the absence of a counterexample to the CAC-conjecture in rank $\le
m$, as stated in Theorem~\ref{t1}.  As another illustration of subtlety of operations (CT1)--(CT3), we remark that operation (CT2) is not invertible in general   and  it is not clear whether  operation  (CT2)  could be reversed with a  composition of (CT1)--(CT3). Note  that (T1)--(T3) are invertible.
\smallskip

In Sect.~2, we prove Theorem~\ref{t1} and Corollaries~\ref{c1}--\ref{c3}. In Sect.~3, we  discuss one more satellite conjecture made by Andrews and Curtis \cite[Conjecture~4]{AC66}.
This  conjecture turns out to be false and we provide a counterexample based on results of Myasnikov \cite{M}.

We remark that, in 1968, Rapaport  \cite{R1}, \cite{R2}  gave a counterexample to \cite[Conjecture~2]{AC66}. Thus all satellite Conjectures~2--4 of \cite{AC66} are now resolved  with only \cite[Conjecture~3]{AC66} being true. Recall that  \cite[Conjecture~1]{AC66} is the original Andrews--Curtis conjecture in rank $m=2$.

In Sect.~4, we look at a more restrictive version of the CAC-conjecture with and without stabilizations, abbreviated as CCAC-conjecture, in which the analogue of operation (CT2) requires a complete cancellation of one of the words $W_i, W_j$ in the cyclic product  $W_i W_j$.  We will show that CCAC-conjecture with stabilizations is still equivalent to the AC-conjecture with stabilizations, whereas the CCAC-conjecture without stabilizations is false.

\section{Proofs of Theorem~\ref{t1} and Corollaries~\ref{c1}--\ref{c3} }

We start by  proving Theorem~\ref{t1}.

Let \eqref{e1} be a presentation of the trivial group, let the words
$R_1, \dots, R_m$ be \crd\ and let the AC-conjecture be true  for all presentations of rank $< m$. We need to show that if the
AC-conjecture holds for  the presentation  \eqref{e1},
then the CAC-conjecture also holds for this presentation.
To prove this, we  argue by induction
on $m \ge 1$, assuming that both  AC- and CAC-conjectures hold true  for all presentations of rank  $< m$. Note that the basis step of this induction for $m =1$
is obvious and we may assume that $m \ge 2$.

Suppose that $\sigma_1, \dots,\sigma_\ell$ are \Etrn s that are
applied to the $m$-tuple
$$
\R = (R_1, \dots, R_m)
$$
to obtain  the letter tuple $(a_1, \dots, a_m)$. Denote   $\R(0) := \R$ and $\R(k) :=\sigma_k
\dots \sigma_1(\R)$, hence,  $\R(\ell) =  (a_1, \dots, a_m)$.
Let $X \equiv Y$ denote the
literal (or letter-by-letter) equality of words $X$, $Y$ over $\A^{\pm 1}$.
We also denote
\begin{equation*}
\R(k) := (R_1(k), \dots, R_m(k)) \quad \mbox{ and} \quad \ \bar \R(k)
:= (\bar R_1(k), \dots, \bar R_m(k))  ,
\end{equation*}
where $\bar R_1(k), \dots, \bar R_m(k)$ are \crd\ words such that,
for every $i$, $\bar R_i(k)$ is obtained from $R_i(k)$ by cyclic cancellations, hence,
$$
R_i(k) \equiv  S_i(k) \bar R_i(k)  S_i(k)^{-1} ,
$$
with some words  $S_i(k)$.

By induction on $k \ge 0$, we will be proving that $\bar \R(k)$ can be
obtained from $\R$ by a sequence of \CEtrn s (CT1)--(CT3). Since $\R$ is
cyclically reduced, we have $\bar \R(0) = \R$ and
the basis step of the induction is  true.
We now address the induction step from $k$ to $k+1$.

If $\sigma_{k+1}$ is of type (T1), then we can perform an analogous
operation (CT1) over $\bar \R(k)$ and obtain $\bar \R(k+1)$.
A reference to the induction hypothesis completes this case.

If $\sigma_{k+1}$ has type (T3), then no change is needed, we can
set  $\bar \R(k+1) := \bar \R(k)$, and refer to the induction hypothesis.
\medskip

From now on assume that $\sigma_{k+1}$ has type (T2) and $R_s(k+1) =
R_s(k)R_t(k)$ in $\FF(\A)$ with $t \neq s$. To simplify notation, rename
$U_i := \bar R_i(k)$, $i =1, \dots, m$, and $\U := \bar \R(k)$.

\smallskip

Suppose that $X, Y_1, \ldots, Y_r$ are words over $\A^{\pm 1}$. We say that
$X$ {\em occurs} in words $Y_1, \ldots, Y_r$ if there an index  $i$ such that
$Y_i \equiv Y_{i,1} X Y_{i,2}$ with some words $Y_{i,1}, Y_{i,2}$, i.e.,
$X$ occurs in at least one of the words $Y_1, \ldots, Y_r$.
In this case, we may also say that $X$ is a {\em subword} of  $Y_1, \ldots, Y_r$.
The number of occurrences of  $X$ in  $Y_1, \ldots, Y_r$ is the sum of
the numbers  of occurrences of  $X$ in every $Y_i$.

\begin{lem}\label{Lem1} For every letter $a_j \in \A$ and every word $U_i $ in $\U$,
one may assume that the number of  occurrences of $a_j$ in words $U_i, U_i^{-1}$ is not one.
\end{lem}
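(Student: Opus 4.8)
The plan is to note that the quantity in the lemma — the number of occurrences of $a_j$ in $U_i$ together with those in $U_i^{-1}$, which equals the number of occurrences of $a_j^{\pm 1}$ in $U_i$ — can equal $1$ only in a very special situation, namely when $U_i$ uses the letter $a_j$ exactly once; and I will argue that in that situation the \emph{entire} conclusion of Theorem~\ref{t1} already follows, so nothing is lost by assuming it does not occur.

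So suppose some $U_i$ contains exactly one occurrence of $a_j^{\pm 1}$. First, applying (CT1) if that occurrence is $a_j^{-1}$ and then a suitable (CT3), I bring $U_i$ to the form $a_j V$, where $V$ is a reduced word not involving $a_j^{\pm 1}$; this uses that $U_i$, being a member of the cyclically reduced tuple $\U = \bar \R(k)$, is cyclically reduced. Next I clean the letter $a_j$ out of every other coordinate: for each $k \ne i$, as long as $U_k$ involves $a_j^{\pm1}$, I use (CT1) if necessary so that $U_k$ contains $a_j^{-1}$, then (CT3) to make $U_k$ end in $a_j^{-1}$, and then (CT2) to replace $U_k$ by the cyclically reduced form of $U_k U_i$; since $V$ is free of $a_j^{\pm1}$, this cancels at least one $a_j^{\pm1}$ and strictly lowers the number of occurrences of $a_j^{\pm1}$ in $U_k$, so after finitely many steps $U_k$ no longer involves $a_j^{\pm1}$. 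These operations never touch coordinate $i$, so at the end I have a cyclically reduced tuple $\U' = (U_1',\dots,U_m')$, obtained from $\U$ — hence from $\R$ — by \CEtrn s, with $U_i' = a_j V$ and no other $U_k'$ involving $a_j^{\pm 1}$.

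Now the descent in rank. Since \CEtrn s preserve the group defined by the tuple, $\langle \, \A \, \| \, \U' \, \rangle$ is trivial; eliminating the generator $a_j$ by means of the relator $U_i' = a_j V$ (legitimate because $V$ does not involve $a_j$, so the remaining relators are unchanged) leaves a balanced presentation $\langle \, \A\setminus\{a_j\} \, \| \, \U'' \, \rangle$ of the trivial group of rank $m-1$, where $\U''$ consists of the $U_k'$ with $k \ne i$, all cyclically reduced. By the induction hypothesis the CAC-conjecture holds in rank $m-1$, so $\U''$ can be brought by \CEtrn s over $\A\setminus\{a_j\}$ to the tuple of the letters $a_\ell$, $\ell \ne j$; such \CEtrn s are also \CEtrn s over $\A$, acting on the coordinates $\ne i$ and fixing $U_i' = a_j V$. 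At this point every coordinate $\ne i$ holds a single letter $a_\ell$, $\ell \ne j$, and coordinate $i$ holds $a_j V$ with $V$ a word in those letters; multiplying coordinate $i$ on the right, via (CT2) (and (CT1) on a helper coordinate when a negative power is needed), by the letters read off $V^{-1}$ one at a time, cancels $V$ while keeping coordinate $i$ cyclically reduced, so coordinate $i$ becomes $a_j$. Finally a permutation of the coordinates — realizable by \CEtrn s because all the words involved are distinct single letters — yields $(a_1,\dots,a_m)$. Thus $(a_1,\dots,a_m)$ is reachable from $\R$ by \CEtrn s, i.e.\ the CAC-conjecture holds for \eqref{e1}, which is exactly what Theorem~\ref{t1} asserts; so we may assume that no $U_i$ contains $a_j$ exactly once.

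The main obstacle is not conceptual but bookkeeping: checking in the cleaning step that the cyclically reduced form of $U_k U_i$ genuinely loses an occurrence of $a_j^{\pm1}$ (which is why the preliminary (CT3) lining up the cancelling pair is needed), that the process cannot stall, and that cyclic reducedness is preserved throughout both the cleaning of the $U_k$ and the final cancellation of $V$. One should also keep in mind that the word produced by (CT2) is determined only up to cyclic permutation, so (CT3) is used freely to put words into whatever cyclic form is convenient, and that transpositions of coordinates holding single letters are easily checked to be compositions of \CEtrn s, even though (CT2) is not in general reversible.
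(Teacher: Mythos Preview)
Your argument is correct and follows essentially the same route as the paper's proof: put $U_i$ in the form $a_jV$ with $V$ free of $a_j^{\pm1}$, use \CEtrn s to clear $a_j^{\pm1}$ from the other coordinates, invoke the induction hypothesis in rank $m-1$ to reduce those coordinates to single letters, and then cancel $V$. You supply more explicit bookkeeping for the cleaning step and the final cancellation of $V$ than the paper does, but the strategy is identical.
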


\begin{proof} Suppose, on the contrary,
that the words $U_i, U_i^{-1}$ contain a single occurrence of a letter
$a_j \in \A$. Reindexing and using
operations (CT1), (CT3) if necessary, we may assume that $i = j
=1$ and $U_1 \equiv a_1 U_{1,0}$,
where $U_{1,0}$ has no occurrences of $a_j$ and $a_j^{-1}$.
Applying operations (CT1)--(CT3),  we can turn $\U$ into $(U_1, V_2,
\dots, V_m)$, where $V_2, \dots, V_m$ have no occurrences of
letters $a_1, a_1^{-1}$. Hence, the presentation $\langle a_2,
\dots, a_m \, \| \,  V_2,\dots, V_m \rangle$ defines the trivial
group and, by the induction hypothesis on $m$, the $(m-1)$-tuple
$(V_2, \dots, V_m)$ can be transformed into $(a_2, \dots, a_m)$ by
\CEtrn s (CT1)--(CT3). Consequently, using \CEtrn s  (CT1)--(CT3),
we can  turn the $m$-tuple $\U$ into $(U_1, a_2, \dots, a_m)$
and, hence, into $(a_1, a_2, \dots, a_m)$. The proof is complete.
\end{proof}

Reindexing if necessary, we may also assume that $s=1$ and $t =2$,  hence
$R_1(k+1) = R_1(k)R_2(k)$  in $\FF(\A)$. Let $|W|$ denote the length of a word $W$.

\begin{lem}\label{Lem2} Up to cyclic permutations of the words
$\bar R_1(k+1)$,  $U_1$, $U_2$, one may assume that the \crd\  word $\bar R_1(k+1)$,
conjugate to $R_1(k+1)$ in $\FF(\A)$, has one of the following four forms (F1)--(F4),
depicted in Figs.~1(a)--(c).

\begin{enumerate}
\item[(F1)] $\bar R_1(k+1) \equiv U_1 C U_2 C^{-1}$, where $|C| >
0$, see  Fig.~1(a).

\item[(F2)] $\bar R_1(k+1) \equiv DE$, where $U_1 \equiv D P^{-1}$,
$U_2 \equiv P E$, and  $|D|, |E| >0$,    see Fig.~1(b).

\item[(F3)] $\bar R_1(k+1) \equiv D$ and $U_1 \equiv D C U_2^{-1}
C^{-1}$,   where $|C| \ge
0$, see  Fig.~1(c).

\item[(F4)] $\bar R_1(k+1) \equiv D$ and $U_2 \equiv D C U_1^{-1}
C^{-1}$,   where $|C| \ge 0$,  see  Fig.~1(c).
\end{enumerate}
\end{lem}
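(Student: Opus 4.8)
The plan is to analyze the word $R_1(k+1) = R_1(k) R_2(k)$ in $\FF(\A)$ in terms of the cyclically reduced words $U_1 = \bar R_1(k)$ and $U_2 = \bar R_2(k)$ together with their conjugating prefixes $S_1(k)$, $S_2(k)$. Write $R_1(k) \equiv S_1 U_1 S_1^{-1}$ and $R_2(k) \equiv S_2 U_2 S_2^{-1}$ (dropping the index $k$ from $S_1, S_2$). Then $R_1(k+1) = S_1 U_1 S_1^{-1} S_2 U_2 S_2^{-1}$ in $\FF(\A)$, and since we are free to cyclically permute $\bar R_1(k+1)$, the conjugating prefix $S_1$ on the outside is irrelevant: $\bar R_1(k+1)$ is, up to cyclic permutation, the cyclically reduced form of $U_1 \cdot (S_1^{-1} S_2) \cdot U_2 \cdot (S_1^{-1} S_2)^{-1}$. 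So, setting $T := S_1^{-1} S_2$ and reducing, the whole question reduces to: describe the cyclic reduction of $U_1 T U_2 T^{-1}$ where $U_1, U_2$ are cyclically reduced. This is a purely combinatorial statement about free groups, independent of the rest of the induction.

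The key step is then a case analysis of the cancellation pattern in the reduced product $U_1 T U_2 T^{-1}$. First reduce $T$ against $U_1$ on the left and against $U_2$ on the right: write $T \equiv P^{-1} T' Q$ where $P^{-1}$ is the maximal cancellation with a suffix of $U_1$ and $Q$ cancels into a prefix of $U_2$ — more precisely, one peels off the cancellation between $U_1$ and $T$, and between $T$ and $U_2$, being careful that these two cancellation regions may or may not overlap inside $T$. After peeling, one is left with a word of the form $U_1' C U_2' C^{-1}$ (after possibly cyclically reducing the outer ends), where $U_1', U_2'$ are what survives of $U_1, U_2$ and $C$ is what survives of $T$. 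Now the four forms (F1)–(F4) correspond exactly to the possibilities: (F1) nothing extra cancels, $C \ne 1$, and both $U_1, U_2$ survive intact (as cyclic words), giving $U_1 C U_2 C^{-1}$; (F2) the surviving piece of $T$ is empty but $U_1$ and $U_2$ still partly cancel against each other directly, leaving $U_1 \equiv D P^{-1}$, $U_2 \equiv P E$ and $\bar R_1(k+1) \equiv DE$; (F3)/(F4) the cancellation is so extensive that one of $U_1, U_2$ is entirely consumed — e.g. $U_2^{-1}$ is (a cyclic conjugate of) a subword of $U_1$, so $U_1 \equiv D C U_2^{-1} C^{-1}$ and $\bar R_1(k+1) \equiv D$ (and symmetrically for (F4)). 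The symmetry in (T2) between the roles of $R_1(k)$ and $R_2(k)$ after we've already fixed $s=1$, $t=2$ is what forces us to keep both (F3) and (F4) rather than one.

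I expect the main obstacle to be the bookkeeping in the overlap case: when the cancellation of $T$ against $U_1$ on the left and against $U_2$ on the right meet or cross inside $T$, one must check that what remains really is a cyclic permutation of one of (F1)–(F4) and not some fifth configuration, and in particular that in the heavily-cancelling case the leftover genuinely has the conjugated-subword shape $D C U_2^{-1} C^{-1}$ with $C$ a legitimate (possibly empty) word rather than something where the pieces interleave. Handling the cyclic-reduction of the outer ends of $U_1' C U_2' C^{-1}$ — since $U_1, U_2$ are cyclically reduced but the product need not be, a final cyclic cancellation between the leading part of $U_1'$ and the trailing part of $C^{-1}$ can occur — is where the ``up to cyclic permutations of $\bar R_1(k+1)$, $U_1$, $U_2$'' clause earns its keep, and I would use that freedom to normalize away exactly those boundary cancellations before reading off which of (F1)–(F4) applies. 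The length function $|W|$, introduced just before the lemma, is presumably used to make the ``maximal cancellation'' choices well-defined and to see that the process terminates; I would phrase the peeling as: choose $C$ (equivalently $T$) of minimal length in its double coset modulo the obvious cyclic adjustments, and then the four cases are exhaustive.
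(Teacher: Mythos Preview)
Your proposal is correct and would yield a complete proof, but it takes a different route from the paper. The paper argues geometrically via van Kampen (disk) diagrams: it builds a diagram $\Delta$ over the presentation $\langle a_1,\dots,a_m \,\|\, U_1,U_2\rangle$ with two faces $\Pi_1,\Pi_2$ labeled $U_1,U_2$ and paths from a base vertex labeled $S_1,S_2$, so that $\partial\Delta$ reads $S_1 U_1 S_1^{-1} S_2 U_2 S_2^{-1}$. Cancellations are interpreted as foldings of edges, and after folding one obtains a reduced diagram $\Delta'$ whose boundary is $\bar R_1(k+1)$. The four cases (F1)--(F4) are then simply the four possible topological configurations of two disks inside a simply connected planar region: the faces are disjoint (F1), they share boundary but each contributes to $\partial\Delta'$ (F2), or one face's boundary lies entirely inside the other's (F3)/(F4).

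Your approach, by contrast, is purely word-combinatorial: reduce to the cyclic reduction of $U_1 T U_2 T^{-1}$ with $T=S_1^{-1}S_2$ and peel off cancellations layer by layer. This is more elementary in that it avoids any diagram machinery, but the price is exactly the bookkeeping you flagged --- the overlap case where the left and right cancellations of $T$ meet, and the cyclic wraparound when one $U_i$ is entirely consumed. The diagram argument buys you exhaustiveness of the four cases essentially for free from planarity, whereas in your approach you must verify it by hand. Either way the lemma goes through; your sketch is sound.
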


\begin{center}
\begin{tikzpicture}[scale=.88]
\draw  (-4.5,4) ellipse (1 and .8);
\draw  (-1.5,4) ellipse (1 and .8);
\draw  (-3.5,4) [fill = black] circle (.05);
\draw  (-2.5,4) [fill = black] circle (.05);
\node at (-5,4) {$U_1$};
\node at (-1,4) {$U_2$};
\node at (-3,4.5) {$C$};
\draw [-latex](-0.5,4.1) --(-0.5,3.95);
\draw [-latex](-5.5,3.95) --(-5.5,4.1);
\draw (-3.5,4) --(-2.5,4);
\node at (-3,2) {Fig.~1(a)};
\node at (2.5,2) {Fig.~1(b)};
\node at (7,2) {Fig.~1(c)};

\draw  (2.5,4) ellipse (1 and 1.2);
\draw [-latex](-3.5,4) --(-2.9,4);
\draw (2.5,2.8) --(2.5,5.2);
\draw [-latex](2.5,2.8) --(2.5,4.1);
\draw  (2.5,2.8) [fill = black] circle (.05);
\draw  (2.5,5.2) [fill = black] circle (.05);
\draw [-latex](3.5,4.1) --(3.5,3.95);
\draw [-latex](1.5,3.95) --(1.5,4.1);
\node at (1,4) {$D$};
\node at (2,4) {$P$};
\node at (3,4) {$E$};

\draw  (7,4) ellipse (1.5 and 1);
\draw  (6.9,4) ellipse (.85 and .6);
\draw (8.5,4) --(7.75,4);
\draw [-latex](8.5,4) --(8,4);
\draw  (8.5,4) [fill = black] circle (.05);
\draw  (7.75,4) [fill = black] circle (.05);
\draw [-latex](6.05,3.92) --(6.05,4.07);
\draw [-latex](7,5) --(7.1,5);
\node at (6.9,4) {$U_2 (U_1)$};
\node at (7.99,4.37) {$C$};
\node at (7,5.4) {$D$};
\end{tikzpicture}
\end{center}

\begin{proof} It follows from the definitions that, letting $S_1 := S_1(k)$ and $S_2 := S_2(k)$,
we have the following equalities
\begin{align*}
& R_1(k) \equiv S_1 U_1 S_1^{-1} ,  \quad  R_2(k) \equiv S_2 U_2 S_2^{-1} , \quad  R_1(k+1)  \equiv S_1 U_1 S_1^{-1}  S_2 U_2 S_2^{-1} , \\
& R_1(k+1) \equiv S_1(k+1) \bar  R_1(k+1) S_1(k+1)^{-1} .
\end{align*}

To analyze possible cancellations in the product  $S_1 U_1 S_1^{-1}  S_2 U_2 S_2^{-1}$, we consider a disk diagram $\Delta$ over the group presentation
\begin{equation}\label{prU2}
\PP_U = \langle \,  a_1,  \dots, a_m  \,  \|  \, U_1 , U_2  \, \rangle  .
\end{equation}
Recall that a disk diagram over a group presentation is a finite connected and simply connected 2-complex with a labeling function used for geometric interpretation of consequences of defining relations, details can be found in \cite{I94}, \cite{LS}, \cite{Ol}. We define
a disk diagram $\Delta$ over  \eqref{prU2} so that $\Delta$ contains two faces $\Pi_1, \Pi_2$ whose clockwise oriented boundaries  $\p \Pi_1, \p \Pi_2$  are labeled by  words $U_1, U_2$, resp., and $\Delta$ contains a vertex $o$ that is connected to  $\p \Pi_1$, $\p \Pi_2$ by paths labeled by words $S_1, S_2$, resp., see Fig.~2. Then the clockwise oriented boundary $\p{|_o}\Delta$ of $\Delta$, starting at $o$, is labeled by  the word $S_1 U_1 S_1^{-1}  S_2 U_2 S_2^{-1} $, which we write in the form
$ \ph( \p{|_o}\Delta ) \equiv  S_1 U_1 S_1^{-1}  S_2 U_2 S_2^{-1}$.

\begin{center}
\begin{tikzpicture}[scale=.70]
\draw  (-4.5,4) ellipse (1 and 1);
\draw  (-0.5,4) ellipse (1 and 1);
\draw  (-2.5,2) [fill = black] circle (.05);
\node at (-6,4) {$U_1$};
\node at (1,4) {$U_2$};
\node at (-3.6,2.4) {$S_1$};
\node at (-1.36,2.4) {$S_2$};
\draw [-latex](0.5,4.1) --(0.5,3.95);
\draw [-latex](-5.5,3.95) --(-5.5,4.1);
\node at (-2.5,.7) {Fig.~2};
\node at (-2.5,1.5) {$o$};
\draw (-2.5,2) --(-3.8,3.3);
\draw (-2.5,2) --(-1.2,3.3);
\draw [-latex](-2.5,2) --(-3.25,2.75);
\draw [-latex](-2.5,2) --(-1.75,2.75);
\draw  (-1.2,3.3) [fill = black] circle (.05);
\draw  (-3.8,3.3) [fill = black] circle (.05);
\draw [-latex](-5.5,3.95) --(-5.5,4.1);
\draw [-latex](-5.5,3.95) --(-5.5,4.1);
\end{tikzpicture}
\end{center}

\noindent
Cancellations in the cyclic  word  $ S_1 U_1 S_1^{-1}  S_2 U_2 S_2^{-1}$ can be interpreted
as folding and pruning off edges in $\Delta$ which, after all cancellations are done,
will turn into a diagram $\Delta'$ that contains two faces $\Pi'_1, \Pi'_2$ and has
 $\ph( \p{|_{o'}}\Delta' ) \equiv \bar R_1(k+1)$ for a suitable vertex
 $o' \in  \p \Delta'$. It is not difficult to check that the following hold true.
 If the boundaries $\p \Pi'_1$, $\p \Pi'_2$ have no common vertex then Case (F1) holds. If the boundaries  $\p \Pi'_1$, $\p \Pi'_2$ contain a common vertex and both  $\p \Pi'_1$, $\p \Pi'_2$ have  edges on $\p \Delta'$, then Case        (F2) holds. If $(\p \Pi'_2)^{-1}$ is a subpath of $\p \Pi'_1$, then Case (F3) holds.
Finally, if $(\p \Pi'_1)^{-1}$ is a subpath of $\p \Pi'_2$, then Case (F4) holds.
\end{proof}

In view of Lemma~\ref{Lem2}, we need to consider Cases (F1)--(F4).
\smallskip

In Case (F2),  we apply (CT3) to $U_1 $ to get $DP^{-1}$ and apply (CT3) to $U_2$ to get $PE$. Then we use (CT2) to turn  $DP^{-1}$ into $DE$. Since $DE$ is a cyclic permutation of $\bar R_1(k+1)$, the induction step is complete.

\smallskip

In Case (F3),  we apply (CT3) to $U_1 $ to get $C^{-1} D C U_2^{-1}$ and use (CT2) to
make the transformation  $C^{-1} D C U_2^{-1} \to D$.  Since $D$ is a cyclic permutation of $\bar R_1(k+1)$, the induction step is complete.
\smallskip

Case (F4) is analogous to Case (F3) with $U_1$ and $U_2$ switched.

\smallskip

It remains to study Case (F1).

Let $\W = (W_1, \dots, W_m)$ be an $m$-tuple of nonempty \crd\
words over $\A^{\pm 1}$. Let $W_1 \equiv AB$, where $|A|, |B| >0$, and $C$  be a
cyclic permutation of the word $W_{j'}^\e$, where $j' >1$ and $\e =
\pm 1$. If the word $ACB$ is reduced, then the operation over $\W$
so that $W_1$ is replaced with $ACB$ is called a {\em simple 1-insertion}.
Let $\W(i)$ denote an $m$-tuple obtained from a \crd\ tuple $\W = \W(0)$ by a sequence of $i$ simple
1-insertions. It is easy to see that $\W(i)$ can be obtained from $\W(0)$
by a sequence of \CEtrn s (CT1)--(CT3) and that $\W(0)$ can be
obtained back from $\W(i)$  by a sequence of \CEtrn s
(CT1)--(CT3).
\medskip

Consider the following properties of a reduced word  $W$ over the alphabet $\A^{\pm 1}$.

\begin{enumerate}
\item[(P)] For a given letter $a \in \A^{\pm 1}$,  there are  distinct letters $b, c \in \A^{\pm 1}$ such that
$ab$ and $ac$ occur in the words $W, W^{-1}$.

\item[(Q)]  For every letter $a \in \A^{\pm 1}$, there are  distinct letters
$b, c \in \A^{\pm 1}$, depending on $a$, such that
$ab$ and $ac$  occur in the words $W, W^{-1}$.
\end{enumerate}

\begin{lem}\label{Lem3}
Suppose the first and the last letters of the word $U_1$ are distinct.   Then there exists a sequence of simple
1-insertions that transform the tuple
$$
\U = (U_1, \dots, U_m)
$$
into $\U_V = (V, U_2, \dots, U_m)$ in which the \crd\ word
$V$ has  property (Q).
\end{lem}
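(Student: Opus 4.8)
The plan is to build the word $V$ by repeatedly applying simple $1$-insertions that "splice in" copies (cyclic permutations) of the words $U_2, \dots, U_m$ at suitable places inside $U_1$, so as to create, for every letter $a \in \A^{\pm1}$ that occurs in the growing word, two distinct continuations $ab$ and $ac$. The key observation is that since the presentation $\PP_U$ in \eqref{prU2} defines the trivial group, every letter $a_j \in \A$ must actually occur in the tuple $\U$ (otherwise the Tietze transformations could not kill $a_j$); combined with Lemma~\ref{Lem1}, which lets us assume no letter occurs exactly once in any $U_i, U_i^{-1}$, we get that every letter that occurs in $\U$ occurs at least twice somewhere in $U_1, \dots, U_m, U_1^{-1}, \dots, U_m^{-1}$. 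That multiplicity is what we exploit: two occurrences of $a$ followed (in the cyclic words) by different letters give us the pair $b \ne c$ we want, and a single simple $1$-insertion of an appropriate cyclic permutation of some $U_{j'}^{\pm1}$ into $U_1$ realizes both continuations inside the enlarged first word.

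First I would set up bookkeeping: for the current first word $W_1$ (starting from $U_1$), list the letters $a \in \A^{\pm1}$ for which property (P) fails, i.e.\ every occurrence of $a$ in $W_1, W_1^{-1}$ is followed by the same letter (or $a$ occurs but is "terminal" in a way that doesn't give two continuations). For such an $a$, I would locate an occurrence of $a$ somewhere in the tuple — either elsewhere in $W_1$ itself, or in some $U_{j'}$, $j' > 1$ — with a continuation different from the one already seen; such an occurrence exists because $a$ (or $a^{-1}$) has total multiplicity $\ge 2$ and, after possibly passing to cyclic permutations and using the hypothesis that the first and last letters of $U_1$ differ (so cyclic reducedness is not an obstruction to reading continuations across the "seam"), we can always find two occurrences with distinct right-neighbors. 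Then I would perform a simple $1$-insertion: write $W_1 \equiv AB$ with the cut made right after one occurrence of $a$, and insert the cyclic permutation $C$ of $W_{j'}^\e$ that begins just after the other occurrence of $a$, so that $ACB$ is reduced (choosing the cut in $C$ to avoid cancellation at both splice points, which is possible precisely because we are free to choose which cyclic permutation of $W_{j'}^{\e}$ to use and $W_{j'}$ is cyclically reduced). After this insertion, $a$ now has two distinct continuations in the enlarged word, so property (P) holds for $a$; and the hypothesis "first and last letters distinct" can be maintained for the new word by a final cyclic adjustment. Iterating over all finitely many letters $a \in \A^{\pm1}$ yields a word $V$ with property (Q).

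The main obstacle I anticipate is the careful verification that each simple $1$-insertion can be arranged to be \emph{reduced} (so that it is genuinely a simple $1$-insertion in the sense defined) while simultaneously (i) creating the desired new continuation for the target letter $a$ and (ii) not destroying continuations already established for previously-treated letters. Point (ii) is the delicate one: inserting a block $C$ in the middle of $W_1$ changes exactly two adjacencies in $W_1$ (the one at the cut, which is replaced) and introduces $|C|$ new adjacencies from $C$ itself; I need to argue that the finitely many "good" adjacencies accumulated so far can be preserved by always making the new cut at a position that is not one of those good adjacencies — which is possible as long as $W_1$ is long enough, and length only grows under insertions, so this is not a real constraint except possibly at the very first step, where one checks it directly (and where, if needed, one can first do a "free" insertion to lengthen $W_1$). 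I would also need the auxiliary fact, already noted in the excerpt, that any tuple obtained by simple $1$-insertions is reachable from $\U$ by (CT1)--(CT3) and conversely, so that the constructed $\U_V$ is legitimate for the induction; this is stated and can be cited. The counting argument underlying existence of a second occurrence with a different neighbor — using triviality of $\PP_U$ plus Lemma~\ref{Lem1} — is the conceptual heart, and I expect it to go through cleanly once phrased in terms of "every letter of $\A$ has total occurrence count $\ne 1$ in $\U$, hence $\ge 2$."
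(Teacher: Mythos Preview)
Your overall strategy---build $V$ from $U_1$ by simple $1$-insertions of cyclic permutations of $U_{j'}^{\pm 1}$, fixing property~(P) one letter at a time---is exactly the paper's. The gap is in what you call ``the conceptual heart'': the claim that for every $a\in\A^{\pm 1}$ one can \emph{find} two occurrences of $a$ in the cyclic words $U_1^{\pm 1},\dots,U_m^{\pm 1}$ with distinct right-neighbours. Multiplicity $\ge 2$ does not give this. Take $m=2$, $U_1=abab^2$, $U_2=ababab^3$. One checks that $\langle a,b\mid U_1,U_2\rangle$ is trivial (substitute $c=ab$ to get $\langle c,b\mid c^2b,\,c^3b^2\rangle$, whence $c=b=1$), that Lemma~\ref{Lem1} holds, and that the first and last letters of $U_1$ differ. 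Yet \emph{every} occurrence of the letter $a$ in $U_1,U_1^{-1},U_2,U_2^{-1}$ is followed by $b$. So there is no ``other occurrence with a different continuation'' to splice in, and your plan stalls for this $a$.

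The paper does not search for an existing distinct continuation; it \emph{manufactures} one. Given that every $a$ in the current word is followed by $b$, it cuts right after some $a$ and inserts a cyclic permutation of $U_2^{\varepsilon}$ (or another $U_j^{\varepsilon}$) whose \emph{first letter} $d$ satisfies $d\ne b$ and $d\ne a^{-1}$. The existence of such a cyclic permutation is what the case analysis in the paper secures: it uses Lemma~\ref{Lem1} together with the triviality of the group (via the abelianisation: if some $U_j$ were a power of a single generator $b^k$ with $|k|\ge 2$, the relator matrix would have determinant divisible by $k$) to guarantee that $U_2,U_2^{-1}$ contain at least two distinct letters of $\A$, hence admit a cyclic permutation starting with a letter other than $b$. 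The cases split on whether $a$ occurs in $U_2^{\pm 1}$, whether $b$ does, and whether $b=a$; in each case an explicit $\widetilde U_2^{\varepsilon}$ is produced. Your preservation concern (ii) is handled in the paper not by avoiding protected positions but by observing that the insertion destroys exactly one two-letter subword, while the second witness to~(P) for each previously treated letter sits elsewhere and survives. Your length-based avoidance idea could also be made to work, but only after the existence step above is repaired.
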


\begin{proof} Note that, by Lemma~\ref{Lem1}, $|U_i | >1$ for every $i$.
Next, we observe that if $U_1 \equiv AB$ with $|A|, |B| >
0$, then for every $j \ge 2$ at least one of the words $AU_jB$,
$AU_j^{-1}B$ is \crd .  We also note that if a letter $e \in \A$ does not occur
in words  $U_2, U_2^{-1}, \dots$,  $ U_{m}, U_m^{-1}$  then it follows from
Lemma~\ref{Lem1} and the triviality of
the group given by presentation
\begin{equation}\label{gpau}
\langle \, a_1, \ldots , a_m  \,  \|  \,  U_1, \ldots , U_m \,  \rangle
\end{equation}
that $U_1$ contains at least three occurrences of
$e, e^{- 1}$ and, by Lemma's assumption, at most one of these occurrences  is the first or the last letter of $U_1$. On the other hand, if a letter $e \in \A$ occurs in words $U_i, U_i^{-1}$ for some $i >1$ then,  by Lemma~\ref{Lem1}, there are at  least two
such occurrences.  Therefore, using  the words  $U_2, \dots$,
$U_{m}$ and making at most  $m-1$ simple
1-insertions, we can obtain a word $V_{m-1}$ from $V_0 := U_1$  such
that, for every $a \in \A^{\pm 1}$, the words $V_{m-1}$,
$V_{m-1}^{-1}$ contain at least two distinct occurrences of words
$ab$, $ac$, where $b, c \in \A^{\pm 1}$. Below we will make more simple  1-insertions to guarantee    that $b \ne c$, i.e., to guarantee that the resulting word would have property (P) relative to $a$.

\smallskip

Fix a letter $a \in \A^{\pm 1}$. As was shown above, the words $V_{m-1}$,
$V_{m-1}^{-1}$ contain  two distinct occurrences of words
$ab$, $ac$, where $b, c \in \A^{\pm 1}$. If $b \ne c$ then $V_{m-1}$ has property (P) relative to $a$ and we do not need to do anything. Suppose $b = c$.

\smallskip

First assume that the letter $a$ does not occur in words
 $U_2 , U_2^{- 1} , \dots$, $U_m, U_m^{-1}$.

\smallskip

Let $V_{m-1}^{\delta} \equiv DE$ be the factorization of
$V_{m-1}^{\delta}$, $\delta = \pm 1$, defined by an
occurrence of  $ab$  in  $V_{m-1}^{\delta}$, so that  $D \equiv D_1 a$ and $E \equiv b E_1$. It
follows from Lemma~\ref{Lem1} and the triviality of the group given by presentation \eqref{gpau}
that $U_2, U_2^{- 1}$ contain occurrences  of at
least two distinct letters of $\A$.
This means that if $b$   occurs in $U_2$, $U_2^{- 1}$, then there is a cyclic
permutation $\wtl U_2^\e$ of $U_2^\e$, $\e = \pm 1$, such that
$\wtl U_2^\e \equiv b^{- 1} F d$, where $d \in \A^{\pm 1}$,    $d \neq b^{- 1}$. Then
$
V_{m-1} \to V_{m} \equiv (D \wtl U_2^{\e} E)^\delta
$
is a simple 1-insertion that creates a subword $a b^{-1}$ in $V_{m}$, $V_{m}^{-1}$.
Hence, both $a b^{-1}$, $ab$ occur in $V_{m}$, $V_{m}^{-1}$, as desired.
On the other hand, if $b$ does not occur in $U_2, U_2^{- 1}$,
then
$
V_{m-1} \to V_{m}
\equiv (D U_2 E)^\delta
$
is a simple 1-insertion that
produces a subword $ad$ in $V_{m}^{\delta}$, for some $d \in \A^{\pm 1}$,
$d \neq b$. Hence, both $a d$, $ab$ occur in $V_{m}$, $V_{m}^{-1}$, as required.

The case when $a$ does not occur in words
$U_2 , U_2^{- 1} , \dots$, $U_m, U_m^{-1}$ is complete.
\smallskip

Now assume that the letter $a$ occurs in words $U_2 , U_2^{- 1} , \dots$, $U_m, U_m^{-1}$, say, $a$ occurs in
$U_2, U_2^{- 1}$. As above, we
let $V_{m-1}^{\delta} \equiv DE$   be the factorization of
$V_{m-1}^{\delta}$, $\delta = \pm 1$, defined by an
occurrence of  $ab$  in  $V_{m-1}^{\delta}$,
so that  $D \equiv D_1 a$ and $E \equiv b E_1$.
\smallskip

Consider two cases: $b = a$ and $b \neq a$.

Assume that $b = a$. It follows from  Lemma~\ref{Lem1}  and the triviality of the group
given by presentation \eqref{gpau} that $U_2$ is not a power of $a$. Hence,
there is a cyclic permutation $\wtl U_2^\e$ of
$U_2^\e$, $\e = \pm 1$, such that $\wtl U_2^\e \equiv d F a$, $d \in \A^{\pm 1}$   and    $d
\neq a^{\pm 1}$. Then
$
V_{m-1} \to V_{m} \equiv (D \wtl
U_2^{\e} E)^\delta
$
is a simple 1-insertion that creates a
subword $ad$ in $V_{m}^{\delta}$ with $d \neq b$.
This means that  $a b$ and $ad$ occur in $V_{m}$, $V_{m}^{-1}$, as required.
\smallskip

Suppose that $b \ne a$. Recall that $a$ occurs in $U_2, U_2^{-1}$.
If $b$ does not occur in $U_2, U_2^{-1}$, then we consider a cyclic
permutation $\wtl U_2^\e$ of $U_2^\e$, $\e = \pm 1$, such that
$\wtl U_2^\e \equiv a F$. Then
$
V_{m-1} \to V_{m} \equiv (D \wtl U_2^{\e } E)^\delta
$
is a simple 1-insertion that creates a subword $a^2$ in $V_{m}^{\delta}$.
Hence, $a b$ and $aa$ occur in $V_{m}$, $V_{m}^{-1}$, as desired.
On the other hand, if $b$ occurs in $U_2, U_2^{-1}$, then there is a cyclic
permutation $\wtl U_2^\e$ of $U_2^\e$, $\e = \pm 1$, such that
$U_2^\e \equiv b^{-1} F d$, $d \in \A^{\pm 1}$   and $b^{-1} \neq d$. Then
$
V_{m-1} \to V_{m} \equiv (D \wtl U_2^{\e} E)^\delta
$ is a simple
1-insertion that creates a subword $ab^{-1}$ in
$V_{m}^{\delta }$.
This means that $a b$ and  $ab^{-1}$ occur in $V_{m}$, $V_{m}^{-1}$, as required.
\medskip

Let us overview our intermediate findings.
When given a letter $a \in \A^{\pm 1}$ and  two distinct occurrences
of $ab$  in $V_{m-1},  V_{m-1}^{-1}$, where $b\in \A^{\pm 1}$,  we are always able to make  a simple 1-insertion
$
V_{m-1} \to V_m \equiv  (D \wtl U_j^{\e} E)^\delta
$
in such a way that $j >1$, $\e = \pm 1$, $\delta = \pm 1$,  and $V_{m-1}^{\delta} \equiv DE$ is the factorization of
$V_{m-1}^{\delta}$,  defined by an occurrence of
$ab$ in $V_{m-1}^{\delta}$, so that  $D \equiv D_1 a$ and $E \equiv b E_1$. Moreover, if $\wtl U_j^{\e}$ starts with a letter $d \in  \A^{\pm 1}$ then $d \ne b$. In this situation, we say that the subword $ab$ of $V_{m-1}^{\delta} \equiv D_1ab E_1$ was changed by the  simple 1-insertion  $V_{m-1} \to V_m$  into  $ad$.
Since $d \ne b$ and both $ab, ad$ occur in $V_{m}$, $V_{m}^{-1}$, it follows that the word  $V_{m}$ has  property (P) relative to  $a$.

Clearly, for every $a' \in \A^{\pm 1}$,    the words $V_{m}$, $V_{m}^{-1}$,  similarly  to $V_{m-1}$, $V_{m-1}^{-1}$,  also contain at least two distinct occurrences of subwords of the form
$a'b'$, $a'c'$, where $b', c' \in \A^{\pm 1}$. If $V_{m}$ does not have property (P)  relative to $a'$, then $b' = c'$ and,  arguing as above, we can make a  simple 1-insertion that converts  $V_{m}$ into  $V_{m+1}$ and changes one of  the
subwords $a'b'$    into $a'd'$ with $d' \ne b'$. Note that this simple 1-insertion  preserves   property (P) of   $V_{m}$ relative to $a$ because one of the occurrences of $a' b'$ is not affected by the performed   1-insertion. As a result, the word  $V_{m+1}$ has  property (P)  relative to $a$ and relative to $a'$.
Iterating our arguments for all  $e \in \A^{\pm 1}$, we obtain  property (P) for the word $V :=  V_{m'}$, where $m' \le 3m-1$, relative to  every letter $e \in  \A^{\pm 1}$ which means property (Q) for $V$.
\end{proof}

\begin{lem}\label{Lem4} Suppose that the first word $U_1$ of the
$m$-tuple $\U =(U_1, \dots, U_m)$  has property (Q). Furthermore, assume that the
word $U'_1 C U_2C^{-1}$, where $C$ is some word
and $U'_1$ is a cyclic permutation of $U_1^{\e_1}$, $\e_1
=\pm 1$, is \crd . Then the $m$-tuple $\U$ can be transformed into
$$
\U_C = (U'_1 C U_2C^{-1}, U_2, \dots, U_m)
$$
by a finite sequence of \CEtrn s (CT1)--(CT3).
\end{lem}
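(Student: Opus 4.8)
\emph{Plan of proof.} The plan is to reproduce, using only cyclic operations, the handful of EN-moves that make this statement trivial: conjugate $U_2$ to $C U_2 C^{-1}$ by (T3), multiply the first component by it via (T2), and conjugate $U_2$ back. Since $C U_2 C^{-1}$ is not cyclically reduced when $C$ is nonempty, this conjugation has to be \emph{internalized} inside the first component so that every intermediate tuple stays cyclically reduced. Using (CT1) and (CT3) we may first replace $U_1$ by $U'_1$: property (Q) is invariant under inversion and is preserved by a cyclic permutation up to a bounded defect, which, if present, is repaired by finitely many simple 1-insertions exactly as in the proof of Lemma~\ref{Lem3}. So it suffices to transform $(U'_1, U_2, \dots, U_m)$ into $(U'_1 C U_2 C^{-1}, U_2, \dots, U_m)$, with $U'_1$ having property (Q) and $U'_1 C U_2 C^{-1}$ cyclically reduced.

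I would then induct on $|C|$. The base case $|C| = 0$ is a single (CT2), which replaces $U'_1$ by $U'_1 U_2$; this word is cyclically reduced by hypothesis, so nothing further is needed.

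For the inductive step the difficulty is that an outermost letter of $C$ cannot be absorbed by a cyclic permutation of the first component: cyclic reducedness of $U'_1 C U_2 C^{-1}$ already forbids $U'_1$ from beginning with the first letter of $C$ or ending with its inverse (and likewise blocks the obvious rotation of $U_2$), so conjugating the block $C U_2 C^{-1}$ by one generator would leave the class of cyclically reduced words. Instead I would combine a carefully chosen simple 1-insertion of a cyclic permutation $\tilde U_2$ of $U_2^{\e_2}$ into the current first component with one use of (CT2), together with a temporary replacement of $U_2$ by a (CT2)-product with the first component where this is needed, arranged so that the net effect is to replace $C$ by a strictly shorter conjugator. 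Property (Q) is used precisely to make each such insertion legal: for the letter at which we wish to insert, (Q) supplies two distinct continuations $b \neq c$ with $ab$ and $ac$ occurring in $U'_1, (U'_1)^{-1}$, hence two insertion slots, and whatever the first and last letters of $\tilde U_2$ happen to be, at least one slot produces no cancellation, so we remain inside the class of cyclically reduced tuples. After each insertion I would re-establish property (Q) by the procedure of Lemma~\ref{Lem3}, invoke the induction hypothesis for the shorter conjugator, and finally undo the auxiliary simple 1-insertions (reversible by (CT1)--(CT3), as noted above) and any temporary change of $U_2$ (reversible since $U_2$ remains present as a later component), rearranging by (CT3) to land on the target tuple.

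I expect this inductive step to be the main obstacle, for a structural reason: the conjugator $C$ is an arbitrary word, so the block $C U_2 C^{-1}$ cannot be ``spelled out''; it can only be synthesized out of whole-word insertions of $U_2^{\pm 1}$ and (CT2)-products, while one must simultaneously control every cancellation, keep all intermediate tuples cyclically reduced, and keep property (Q) alive. Choosing which cyclic permutations to insert, at which of the two (Q)-slots, in what order, and when to apply (CT2), so that the construction telescopes to $U'_1 C U_2 C^{-1}$, is the technical heart of the lemma; by contrast, Cases (F2)--(F4) above were comparatively easy because there the conjugator, when present, was already available inside $U_1$.
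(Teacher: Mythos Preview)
Your overall shape---induction on $|C|$, property~(Q) supplying two ``slots'' so that at least one choice avoids cancellation---matches the paper. But your inductive step is a genuine gap, and the paper's actual mechanism is different in two essential respects.

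First, the paper does \emph{not} work in the first component. It reduces the statement to obtaining the tuple $(U_1,\,CU_2C^{-1}U'_1,\,U_3,\dots,U_m)$, i.e.\ the second component changes while $U_1$ stays put. This matters: property~(Q) is a property of $U_1$, and it is not stable under cyclic permutation (a rotation destroys one length-$2$ subword and creates another), so your plan to pass to $U'_1$ and then ``repair'' (Q) by Lemma~\ref{Lem3}-style $1$-insertions alters the very word you must end with. By leaving $U_1$ untouched, the paper has (Q) available at every stage with no repair needed. Relatedly, your induction hypothesis does not obviously apply: if $C\equiv aC_1$, there is no reason $U'_1\,C_1U_2C_1^{-1}$ is cyclically reduced, so you cannot simply ``replace $C$ by a shorter conjugator'' in the same slot. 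The paper sidesteps this by applying the induction hypothesis to $C_1U_2C_1^{-1}U_0$ in the \emph{second} component, where $U_0$ is a cyclic permutation of $U_1^{\pm1}$ of the form $a^{-1}D_* *^{-1}$; property~(Q) of $U_1$ supplies two candidates $U_{1,b},U_{1,c}$ with distinct last letters, and one of them makes that product cyclically reduced.

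Second, and this is the technical heart you are missing: having reached $(U_1,\,C_1U_2C_1^{-1}U_0,\dots)$, the paper multiplies on the left by $U'_1$ (after a rotation), producing a word whose relevant subword is $a^{-1}U'_1U_{0,1}$ with $U_0\equiv a^{-1}U_{0,1}$. One must show this does not collapse---specifically, that its reduced form $T$ keeps both its first and last letters, so that $T\,C_1U_2C_1^{-1}$ remains cyclically reduced. The tool is Magnus's Freiheitssatz for the one-relator group $\langle a_1,\dots,a_m\ \|\ U_1\rangle$: since $a^{-1}U'_1U_{0,1}$ is a product of conjugates of $U_1^{\pm1}$ it is trivial there, so $T$ (being nonempty) must involve every letter of $\A$; together with a parity count this forces $|T|\ge 2m\ge 4$. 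One more multiplication by $U_0^{-1}$ then yields $a^{-1}U'_1 a\,C_1U_2C_1^{-1}$, i.e.\ the conjugator has grown from $C_1$ to $C$. Your sketch (``combine a simple $1$-insertion of $\widetilde U_2$ with one (CT2) \dots\ arranged so that the net effect is a shorter conjugator'') does not supply any cancellation control of this kind, and I do not see how inserting copies of $U_2$ into the first component would telescope to a change of conjugator at all; in the paper it is copies of $U_1$, multiplied against the second component, that do the work.
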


\begin{proof} First we will show that
it suffices to prove that $\U$ can be turned into
\begin{equation}\label{eq3}
     \U'_C  = (U_1, CU_2C^{-1} U_1', U_3, \dots, U_m)
\end{equation}
by operations  (CT1)--(CT3). Indeed, starting with $\U'_C$ and
using (CT1)--(CT2), we can switch $U_1$ and $CU_2C^{-1} U_1'$. Note that a switch is a composition of  (CT1)--(CT2).  Then we  obtain $U_1'$ in place of $U_1$ by (CT3) and (CT1) if $\e_1 = -1$, and  multiply $U_1'$ on the right by $(CU_2C^{-1} U_1')^{-1}$ to get $U_2^{-1}$ in place of $U_1'$.
As a result, we obtain the $m$-tuple
$$
( CU_2C^{-1} U_1', U_2^{-1}, U_3, \dots, U_m)
$$
which can be easily converted into $\U_C$ by (CT1), (CT3).
\smallskip

To prove  that the tuple $\U'_C$, defined by  \eqref{eq3},  can be obtained from $\U$ by \CEtrn s
(CT1)--(CT3), we will argue  by induction on the length $|C| \ge 0$ of the word $C$.

If $|C| = 0$, then our claim is obvious. Assume that $|C| > 0$ and let
$$
C \equiv a C_1 ,
$$
where $a \in \A^{\pm 1}$. By property (Q) for $U_1$, there are distinct letters
$b, c \in  \A^{\pm 1}$ such that  $a^{}b, a^{}c$ occur in words
$U_1$, $U_1^{- 1}$.  Consequently, there is a cyclic
permutation $U_{1, b}$ of  $U_1$ or $U_1^{-1}$  and there is a cyclic
permutation $U_{1, c}$ of  $U_1$ or $U_1^{-1}$ such
that
$$
 U_{1, b} \equiv a^{-1} D_b b^{-1} , \qquad  U_{1, c} \equiv a^{-1} D_c c^{-1} ,
$$
and at least one of the words $C_1 U_2 C_1^{-1} U_{1, b}$, $C_1 U_2 C_1^{-1} U_{1,
c}$ is \crd . Let $U_0$ denote one of  $U_{1,
b}$, $U_{1, c}$ for which the product $ C_1 U_2 C_1^{-1}U_{0 } $
is \crd .  We also denote $U_{0} \equiv a^{-1} U_{0,1}$. By the
induction hypothesis on $|C|$, $\U$ can be converted into
\begin{equation}\label{eq4}
     \U'_{C_1}  := (U_1, C_1 U_2 C_1^{-1} U_0, U_3, \dots, U_m)
\end{equation}
by a sequence of operations (CT1)--(CT3).
Using more  operations (CT1)--(CT3), we can cyclically permute
$C_1 U_2 C_1^{-1} U_0$ to obtain the word $U_{0, 1} C_1 U_2
C_1^{-1}a^{-1} $ and multiply it by $U_1'$ on the left
(which is a composition of (CT1)--(CT2)) to obtain the word  $U_1' U_{0, 1}
C_1 U_2 C_1^{-1}a^{-1}$.
Consider the word
\begin{equation}\label{dop1}
 a^{-1}   U_1' U_{0, 1} C_1 U_2 C_1^{-1} .
\end{equation}

Since $C_1  U_2 C_1^{-1} a^{-1} U_{0, 1}$ is \crd ,
it follows that a first cancellation in the  word \eqref{dop1}, if it exists,
occurs in the prefix subword  $a^{-1} U_1' U_{0, 1}$.
We now discuss cancellations in this subword.

Let $T$ denote an  empty or reduced word obtained from $a^{-1}U_1' U_{0, 1}$ by cancellations.
Since $U_1' C U_2 C^{-1}$ is \crd\  and $C
\equiv a C_1$, it follows that $U'_1$ does not start with $a$.
We also note that the words $ U_1'$, $ U_{0, 1}$ are reduced, and $|U_1'| = |U_{0, 1} |+1$.
Hence, we may conclude $|T| > 0$. Observe that either of $U_1'$, $ U_{0, 1} a^{-1}$ is a cyclic permutations of $U_1$ or $U_1^{-1}$.
Hence, the word $a^{-1}U_1' U_{0, 1}$ represents the trivial element in the
one-relator group given by presentation
\begin{equation}\label{gpU1}
\langle \, a_1, \ldots , a_m  \,  \|  \,  U_1  \,  \rangle .
\end{equation}
Since the word $U_1$ has property (Q), it follows that  every letter
of $\A$ occurs in $U_1, U_1^{-1}$ at least twice. By the classical Magnus's Freiheitssatz, see \cite{LS}, \cite{MKS}, for  one-relator group presentation \eqref{gpU1} applied to the word $T$, we obtain that every letter
of $\A$ must occur in $T, T^{-1}$. Since either of $U_1'$, $U_{0, 1} a^{-1}$ is a cyclic permutation of
$U_1$ or $U_1^{-1}$, it follows that every letter of $\A$  occurs in
$a^{-1}U_1' U_{0, 1}$,   $(a^{-1}U_1' U_{0, 1})^{-1}$   even and positive number of times. Since cancellations of letters are done in pairs,  $T = a^{-1}U_1' U_{0, 1}$ in  $\FF(\A)$,   and  every letter
of $\A$ occurs in $T, T^{-1}$, we deduce that  every letter of $\A$ occurs in $T, T^{-1}$  at least twice and so
$|T| \ge 2m \ge 4$. This implies that  both the first and the last letters of $a^{-1}U_1' U_{0, 1}$ remain uncanceled in the reduced word $T$. Therefore, we may conclude that the word $TC_1 U_2 C_1^{-1}$ is \crd .

Now we  apply a sequence of  \CEtrn s (CT1)--(CT3) to
the tuple   $\U_{C_1}'$, defined by  \eqref{eq4},   so that the second component of $\U_{C_1}'$    would be changing as follows:
\begin{multline*}
C_1 U_2 C_1^{-1} U_0 \to a^{-1} U_{0, 1} C_1 U_2 C_1^{-1} \to
a^{-1} U_1' U_{0, 1} C_1 U_2 C_1^{-1} \overset{\FF(\A)}  = \\
 T  C_1 U_2 C_1^{-1} \to T U_0^{-1}   C_1 U_2 C_1^{-1} \overset{\FF(\A)} =
a^{-1}U_1'  U_{0, 1}  (a^{-1} U_{0, 1})^{-1}   C_1 U_2 C_1^{-1} \overset{\FF(\A)} =
\\  a^{-1}U_1' a C_1 U_2 C_1^{-1} \to  U_1' a C_1 U_2 C_1^{-1}
a^{-1} \equiv   U_1' C U_2 C^{-1}  \to     C U_2 C^{-1} U_1'  .
\end{multline*}
The last word is a desired one and the induction step is complete.
\end{proof}

Recall that, to prove Theorem~\ref{t1}, it remains to study Case (F1), that is,
to  establish that the
operation $U_1 \to U_1 CU_2 C^{-1}$, where $ U_1 CU_2 C^{-1}$ is \crd ,
over the $m$-tuple $\U$ is a composition of
\CEtrn s (CT1)--(CT3). To do this, we first apply Lemma~\ref{Lem3}
and, using simple 1-insertions, turn $U_1$ into a word $V$
with property (Q). Note that  a simple
1-insertion, by the definition, does not change the first
and the last letters of $U_1$ and so the word $VCU_2C^{-1}$, similarly to $U_1 CU_2C^{-1}$,  is \crd . Therefore, Lemma~\ref{Lem4} applies and yields a
sequence of \CEtrn s (CT1)--(CT3) that transforms the $m$-tuple
$\U_V = (V, U_2, \dots, U_m )$ into $(VCU_2C^{-1}, U_2, \dots, U_m
)$. Now we can use   \CEtrn s that convert the subword $V$ of
$VCU_2C^{-1}$ back into $U_1$ (these can be viewed as inverses of
simple 1-insertions). As a result,  we obtain the desired
tuple $(U_1 CU_2C^{-1}, U_2, \dots, U_m)$ and Case (F1) is complete. Theorem~\ref{t1} is proved. \qed
\medskip

\begin{proof}[Proof of Corollary~\ref{c1}] Let $r \ge 2$ be an integer. Suppose that the AC-conjecture holds for every presentation \eqref{e1} of rank $\le r$.  Then, by  induction on $m$, where $1 \le m \le r$, it follows from Theorem~\ref{t1} that the CAC-conjecture also holds for every presentation \eqref{e1} of rank $\le r$ for which  the words  $R_1 , \dots, R_m$ are \crd .

Conversely, suppose that the CAC-conjecture holds for every presentation \eqref{e1} for which the words $R_1, \dots, R_m$ are \crd\ and  $m \le r$. Consider an arbitrary  presentation
\begin{equation}\label{e5}
 \langle \, a_1,  \dots, a_m  \,  \|  \, W_1 , \dots, W_m  \, \rangle
\end{equation}
of the trivial group, where $W_1 , \dots, W_m$ are reduced words over $\A^{\pm 1}$. Let $\bar W_1 , \dots, \bar W_m$ be \crd\ words obtained from $W_1 , \dots, W_m$, resp., by cyclic cancellations. Since the CAC-conjecture holds for the presentation
\begin{equation*}
  \langle \, a_1,  \dots, a_m  \,  \|  \, \bar W_1 , \dots, \bar W_m  \, \rangle ,
\end{equation*}
it follows that there is a finite sequence of operations (CT1)--(CT3) that changes the tuple
$(\bar W_1$, $\dots,  \bar W_m )$ into $(a_1,  \dots, a_m)$. Note that every operation of type (CT1)--(CT3) over a \crd\ tuple  can be presented as a composition of operations
 (T1)--(T3). Therefore, the tuple $(\bar W_1 , \dots, \bar W_m )$ can also be converted into $(a_1,  \dots, a_m)$ by a sequence of operations (T1)--(T3). Since $(W_1 , \dots, W_m )$ can be turned into  $(\bar W_1 , \dots, \bar W_m )$ by operations (T3), the AC-conjecture is also true for presentation~\eqref{e5}.
\end{proof}

\begin{proof}[Proof of Corollary~\ref{c2}]   This is straightforward from  Corollary~\ref{c1}.
\end{proof}

\begin{proof}[Proof of Corollary~\ref{c3}] Let $r \ge 2$ be an integer and assume that
  every  $m$-tuple $\R =(R_1, \dots, R_m)$,
where $2 \le m \le r$,
that defines the trivial group by \eqref{e1},  can be transformed to  $(a_1, \dots, a_m)$ by a finite sequence of  operations  (T1)--(T3).  Let $\bar R_1 , \dots, \bar R_m$ be \crd\ words obtained from $R_1 , \dots, R_m$, resp., by cyclic cancellations. It follows from  Corollary~\ref{c2} that the $m$-tuple $\bar \R =(\bar R_1, \dots, \bar R_m)$ can be turned into
$(a_1, \dots, a_m)$ by  operations  (CT1)--(CT3). Note that every operation of type (CT1)--(CT3) over a \crd\ tuple  can be presented as a composition of operations
 (T1), (T2), (T3C). Hence, the tuple $(\bar R_1 , \dots, \bar R_m )$ can also be converted into $(a_1,  \dots, a_m)$ by a sequence of operations (T1), (T2), (T3C).  It remains to observe that the original $m$-tuple $(R_1 , \dots, R_m )$ can be turned into  $(\bar R_1 , \dots, \bar R_m )$ by operations (T3C).
\end{proof}

\section{One More Conjecture of Andrews and Curtis}

Here we discuss one more satellite hypothesis of Andrews and Curtis   \cite[Conjecture~4]{AC66}  concerning nonminimal pairs of words. According
to \cite{AC66},  a pair $(W_1 ,  W_2 )$ of reduced words $W_1,  W_2$ over the alphabet $\{ a^{\pm 1},  b^{\pm 1} \}$ is called {\em minimal} if no sequence of operations (T1)--(T3)  can decrease the total length $|W_1|+ |W_2|$ of  $(W_1, W_2 )$.

In  \cite[Conjecture~4]{AC66}, Andrews and Curtis speculate that  if $(W_1 ,  W_2 )$ is not a minimal pair,
then $W_1$ and $W_2$, considered as cyclic words, contain a common subword $V$ such that
\begin{equation}\label{e6}
 | W_1 | + | W_2 | -2 |V| < \max (|W_1|, |W_2|) .
\end{equation}
In other words, there are cyclic permutations $\bar W_1$, $\bar W_2$ of \crd\ words
$W_1^{\e_1}$, $W_2^{\e_2}$, where $\e_1, \e_2 = \pm 1$, such that $\bar W_1 \equiv V_1 V $, $\bar W_2 \equiv V^{-1} V_2 $ and the
product $\bar W_1  \bar W_2 \equiv V_1 V_2$ is \crd\ and shorter than a longest of $W_1, W_2$. This conjecture would provide a strong Nielsen-type reduction for nonminimal pairs.
However, the conjecture is false and, as a counterexample, one could use the pair
$(a^2 b^{-3}, aba  b^{-1} a^{-1} b^{-1} )$.
For this pair, if $V$ is a common subword of cyclic permutations of  words $W_1^{\e_1}$ and  $W_2^{\e_2}$, where $\e_1, \e_2 = \pm 1$,  then it is easy to see that $|V| \le 2$. Hence, the inequality  \eqref{e6} could not be satisfied.
On the other hand, as was found out by Myasnikov \cite{M}, see also   \cite{HR}, \cite{MMS02}, the AC-conjecture holds for the pair  $(a^2 b^{-3}, aba b^{-1} a^{-1} b^{-1} )$. Therefore, the pair
$(a^2 b^{-3}, aba b^{-1} a^{-1} b^{-1} )$ is not minimal and gives a counterexample to
\cite[Conjecture~4]{AC66}.

\section{Cancellative Cyclic  Version of the Andrews--Curtis Conjecture}

The significance and power of stabilizations does not look clear even in the special case of presentations coming from spines  of the  3-sphere and is totally obscure for arbitrary presentations.  For this reason,  it seems worthwhile to consider a more restrictive version of the CAC-conjecture with and without stabilizations, called the cancellative cyclic  version of the Andrews--Curtis conjecture and abbreviated as CCAC-conjecture. In this new version, in the analogue of operation (CT2) we require  complete cancellation of one of the words $W_i, W_j$ in the cyclic product  $W_i W_j$.  This CCAC-conjecture enables us to give the first evidence of  importance of stabilizations in the context of the AC-conjecture.  We will show in Theorem~\ref{t2} that the CCAC-conjecture with stabilizations is still equivalent to the AC-conjecture with stabilizations, whereas the CCAC-conjecture {\em without} stabilizations is false.

\medskip

As before, let $\W = (W_1, \dots, W_n)$ be a \crd\ $n$-tuple of words over $\A^{\pm 1}$. Consider the following transformation over $\W$.
\begin{enumerate}
\item[(CCT2)]  For some pair of distinct indices $i$ and $j$, $W_i$ is replaced with a word $W$,
where $W$ is a cyclically reduced or empty word obtained from the product
$ W_i W_j$ by making cancellations and cyclic cancellations and $W$ is such that
   $|W|  \le \max (|W_i|, |W_j|) - \min (|W_i|, |W_j|)$.
\end{enumerate}

It is easy to see that the latter inequality  is equivalent to the condition that one of the words $W_i, W_j$ cancels out completely in the cyclic product $W_i W_j$.  In particular, this means that the analogue of operation (T4) over \crd\ tuples would be meaningless when combined with operations (CT1), (CCT2), (CT3). Indeed, if one of the words
$W_i$, $W_j$ is a letter $b \not\in \A^{\pm 1}$ and the other one is a word over $\A^{\pm 1}$ then  (CCT2) would not be applicable to the pair $W_i$, $W_j$.
For this reason, a suitable analogue of operation (T4) over pairs $\A, \R$, where $\R$ is a \crd\  tuple of words  over $\A^{\pm 1}$,  is defined as follows.

\begin{enumerate}
\item[(CCT4)]
Let $b \not\in \A^{\pm 1}$ be a letter and let $U$ be a word over $\A^{\pm 1}$.
Add $b$ to the alphabet $\A$ and append the word $bU$ to the tuple $\R$.
Conversely, if $bU$ is a word of $\R$, where $b \in \A$, and $b,  b^{-1}$
have no  occurrences  in $U$ and in all  words of $\R$ other than $bU$, then
delete $b$ from $\A$ and delete $bU$ from $\R$.
\end{enumerate}

The cancellative cyclic version of the  Andrews--Curtis conjecture,  abbreviated as {\em CCAC-conjecture},
states that, {\em for every balanced group presentation}
\begin{equation}\label{e1a}
\PP = \langle \, a_1,  \dots, a_m  \,  \|  \, R_1 , \dots, R_m  \, \rangle
\end{equation}
{\em such that \eqref{e1a}    defines the trivial group and $\R = (R_1,
\dots, R_m)$  is cyclically reduced, the $m$-tuple  $\R $ can be brought to the letter tuple $(a_1, \dots, a_m)$ by a finite sequence of \trn  s   (CT1), (CCT2), (CT3)}.

Similarly, the cancellative cyclic version of the  Andrews--Curtis conjecture with stabilizations, briefly {\em CCAC-conjecture with stabilizations},
claims that, {\em for every balanced group presentation  \eqref{e1a}
such that \eqref{e1a}    defines the trivial group  and $\R = (R_1,
\dots, R_m)$  is cyclically reduced,   the $m$-tuple    $\R$ can be brought to the letter tuple $(a_1, \dots, a_m)$ by a finite sequence of \trn  s   (CT1), (CCT2), (CT3), (CCT4)}.

\begin{thm}\label{t2} $\rm{(a)}$   Suppose that  a balanced  presentation \eqref{e1a}  defines the trivial group and the tuple $\R = (R_1, \dots, R_m)$ is \crd .   Then  the  AC-conjecture with  stabilizations holds  true  for \eqref{e1a}   if and only if   the  CCAC-conjecture with  stabilizations holds  for \eqref{e1a}.

 $\rm{(b)}$   The  CCAC-conjecture {\em without}  stabilizations is false.
\end{thm}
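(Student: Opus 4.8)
The plan is to treat the two implications of Part (a) separately and then to produce an explicit counterexample for Part (b).

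For Part (a), the easy implication is that the CCAC-conjecture with stabilizations for \eqref{e1a} implies the AC-conjecture with stabilizations for \eqref{e1a}: here I would simply observe that over a \crd\ tuple each of (CT1), (CCT2), (CT3) is a composition of operations (T1)--(T3) --- (CT3) is conjugation by a prefix, and (CCT2) is an application of (T2) followed by the operations (T3) that cyclically reduce the product --- and that an application of (CCT4) appending $bU$ (with $b\notin\A^{\pm1}$) can be imitated by using (T4) to add $b$ together with the defining word $b$ and then, since the current tuple normally generates the current free group (all transforms of \eqref{e1a} still present the trivial group), writing $U$ as a product of conjugates of the current words and multiplying $b$ on the right by these conjugates one at a time by means of (T1)--(T3); the reverse form of (CCT4) is imitated by running this backwards.

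The substantial implication is the converse. Here I would first invoke the author's earlier result \cite{I05}, that the AC-conjecture with stabilizations holds for \eqref{e1a} if and only if the CAC-conjecture with stabilizations holds for \eqref{e1a}, so that it suffices to simulate one operation (CT2), say $W_i\to W$ with $W$ a \crd\ form of $W_iW_j$, by operations (CT1), (CCT2), (CT3), (CCT4) --- note that (CCT4) subsumes the stabilization (CT4) (take $U$ empty) and that (CT1), (CT3) are common to both systems. The idea I would pursue is to \emph{stage} the product through fresh generators so that every word-multiplication actually carried out is cancellative in the strong sense required by (CCT2): using (CCT4) one appends fresh relators such as $b_iW_i^{-1}$ and $cW^{-1}$; by a (CCT2) of the shape $W_i\cdot(b_iW_i^{-1})\to b_i$ (whose length inequality holds with equality) one replaces $W_i$ by the single letter $b_i$; one then rebuilds the word $W$ in that position by further (CCT2)'s in which one of the two factors is entirely absorbed, together with (CT1) and (CT3); and finally one removes the fresh generators by reverse applications of (CCT4). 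The hard part --- and what I expect to be the main obstacle of the whole theorem --- is that a \emph{non-cancellative} Nielsen multiplication cannot be performed by a single (CCT2); this is exactly where the extra power of (CCT4), namely that it appends an arbitrary word $bU$ rather than merely a letter, must be exploited, and carrying it out requires choosing the auxiliary words carefully so that in every product arising in the simulation one factor cancels away completely.

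For Part (b), I would isolate the following monovariant: operations (CT1) and (CT3) preserve the multiset of lengths $\{|W_1|,\dots,|W_m|\}$, while an application of (CCT2) replaces some $W_i$ by a word $W$ with $|W|\le\max(|W_i|,|W_j|)-\min(|W_i|,|W_j|)$. In particular, if $W_1,\dots,W_m$ all have the same length $L$, then any application of (CCT2) would produce a word of length $\le 0$, i.e.\ the empty word, which means $W_iW_j=1$ in $\FF(\A)$ for some $i\ne j$ after the cyclic permutations and inversions allowed by (CT1), (CT3); since equal group elements have equal reduced words, this forces $W_i$ and $W_j$ to be the same cyclic word up to inversion. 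Hence, if $\R=(R_1,\dots,R_m)$ consists of words of a common length $L>1$ that are pairwise distinct as cyclic words up to inversion, then no sequence of operations (CT1), (CCT2), (CT3) can ever create a word of length $\ne L$, so $\R$ cannot be brought to $(a_1,\dots,a_m)$ and the CCAC-conjecture without stabilizations fails for that presentation. Such examples exist already for $m=2$; the one I would use is
$$
\PP=\langle\, a,b \;\|\; a^3b^2,\ a^{-1}ba^{-1}b^{-2}\,\rangle ,
$$
whose two defining words are cyclically reduced of length $5$. One checks that $\PP$ defines the trivial group: the relations give $a^{-1}ba^{-1}=b^2$ and $b^2=a^{-3}$, whence $b=a^{-1}$ and then $1=a^3b^2=a$. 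Since $a^3b^2$ is a positive word while $a^{-1}ba^{-1}b^{-2}$ contains both $b$ and $b^{-1}$, the two words are not the same cyclic word up to inversion, so by the monovariant the CCAC-conjecture without stabilizations fails for $\PP$. I would add the remark that the AC-conjecture itself holds for $\PP$ --- the deduction $b=a^{-1}$, $a=1$ above is readily converted into a sequence of operations (T1)--(T3) --- so that the failure is caused solely by the cancellative restriction in (CCT2) and not by any Andrews--Curtis pathology of $\PP$.
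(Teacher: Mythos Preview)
Your Part (b) is correct and in fact cleaner than the paper's. The paper uses the pair $(a^2b^{-3},\,abab^{-1}a^{-1}b^{-1})$ and argues that any common cyclic subword has length $\le 2 < 5 = \min(|W_1|,|W_2|)$, so the cancellation needed for (CCT2) can never occur. Your equal--length example $(a^3b^2,\,a^{-1}ba^{-1}b^{-2})$ makes the obstruction sharper: with $|W_1|=|W_2|$ any (CCT2) would have to produce the empty word, which forces $W_1$ and $W_2$ to be the same cyclic word up to inversion. That observation, together with the easy check that the presentation is trivial, finishes (b).

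For Part (a), the easy direction is fine and matches the paper. In the hard direction your reduction via \cite{I05} to simulating a single (CT2) is legitimate (and even eliminates the paper's Case (F1), since without conjugating words the two faces always share a vertex). The gap is in the simulation itself. Your concrete suggestion---append $b_iW_i^{-1}$ and $cW^{-1}$, replace $W_i$ by $b_i$, then ``rebuild $W$''---stalls at the rebuilding step: any (CCT2) involving the single letter $b_i$ and one of the appended words (or $W_j$) produces a word of length at least $|W|+2$ while the length bound is $|W|$, so no such move is available. More generally, once position $i$ holds a fresh letter, the only (CCT2) that can enlarge it is one whose other factor \emph{contains} that letter, and neither $cW^{-1}$ nor $W_j$ does.

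What the paper actually does is the missing idea: it appends a \emph{single} fresh relator of the form $x\cdot(\text{reduced word built from }W_1,W_2)$ and then exhibits an explicit chain of five or six moves in which, at every step, one factor is completely absorbed. For instance, in the (F2) situation $W_1\equiv DP^{-1}$, $W_2\equiv PE$, $W\equiv DE$, the relator appended is $xPEDP^{-1}=xW_2W_1$, and the sequence is
\[
(xW_2W_1,\,W_1,\,W_2)\to\cdots\to (W,\,x,\,W_2)\to(W,\,W_2),
\]
with each intermediate product chosen so that one factor cancels in full. These explicit sequences are the heart of the proof; your outline correctly locates the difficulty but does not supply them.
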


\begin{proof} (a) First we will show that if the AC-conjecture  with  stabilizations holds  for \eqref{e1a}, then the CCAC-conjecture  with  stabilizations also holds  for \eqref{e1a}.

Assume that the AC-conjecture  with  stabilizations holds  for \eqref{e1a}
and $\R$ can be converted into $(a_1, \dots, a_m)$ by a sequence of
\Etrn s (T1)--(T3) and $2s$ stabilizations (T4). It is clear that, in this process of turning  $\R$ into  $(a_1, \dots, a_m)$, one can do all $s$
positive stabilizations (that increase $| \A |$) in the very beginning and all $s$ negative
stabilizations (or destabilizations that decrease $| \A |$) in the very end. Therefore, one can avoid
stabilizations altogether and assume that the $(m+s)$-tuple
$$
(\R, \B)  := (R_1, \ldots, R_m, b_1, \ldots, b_s) ,
$$
where $b_1, \dots, b_s$ are all new letters that were introduced by $s$ positive
stabilizations, can be converted into $(m+s)$-tuple $(\A, \B) := (a_1,
\dots, a_m, b_1, \dots, b_s)$ by a sequence of \Etrn s   of type (T1)--(T3).

Similarly to the proof of Theorem~\ref{t1}, let $\sigma_1, \dots, \sigma_\ell$ be operations of type (T1)--(T3)  that are applied to the $(m+s)$-tuple $(\R, \B)$  to obtain  $(\A, \B)$. Denote
\begin{equation*}
\W(0) := (\R, \B) \quad \ \mbox{ and} \quad \  \W(k)  :=   \sigma_k(\W(k-1))
\end{equation*}
for $k=1, \ldots, \ell$, hence,    $\W(k) =\sigma_k \dots \sigma_1(\W(0))$ and   $\W(\ell) = (\A, \B)$. Also, we denote
\begin{equation*}
\W(k) := (W_1(k), \dots, W_{m+s}(k)) \quad \mbox{ and} \quad \ \bar \W(k)
 := (\bar W_1(k), \dots, \bar W_{m+s}(k))  ,
\end{equation*}
where $\bar W_1(k), \dots, \bar W_{m+s}(k)$ are \crd\ words such that,
for every $i$, $\bar W_i(k)$ is obtained from $W_i(k)$ by cyclic cancellations, so
$W_i(k) \equiv  S_i(k) \bar W_i(k)  S_i(k)^{-1}$ in the free
group $\FF(\A \cup \B)$ with some word  $S_i(k)$ for $k = 0, \dots, \ell$.

By induction on $k \ge 0$, we will be proving that $\bar \W(k)$ can be
obtained from $\W(0)$ by a sequence of  operations (CT1), (CCT2), (CT3), (CCT4). Since
the basis step of this induction is obvious, we only need to make the
induction step from $k$ to $k+1$.

If $\sigma_{k+1}$ is of type (T1), then we can perform an analogous
operation (CT1) over $\bar \W(k)$ and obtain $\bar \W(k+1)$.
A reference to the induction hypothesis completes this case.

If $\sigma_{k+1}$ has type (T3), then no change is needed,  we can
set   $\bar \W(k+1) := \bar \W(k)$, and refer to the induction hypothesis.

Therefore, we may assume that $\sigma_{k+1}$ has type (T2) and $W_t(k+1) =
W_t(k)W_r(k)$  in  $\FF(\A\cup \B)$ with  $t \neq r$. To simplify notation, rename $U_i :=  \bar W_i(k)$, $i =1, \dots, m+s$, and $\U := \bar \W(k)$.
Reindexing if necessary, we may also suppose that $t=1$, $r =2$,  hence,
$W_1(k+1) = U_1 U_2$  in $\FF(\A\cup \B)$.

It follows from  the analogue of Lemma~\ref{Lem2} in which the word $\bar R_1(k+1)$ is replaced with $\bar W_1(k+1)$ that we need to consider Cases (F1)--(F4).

In Case (F3), we apply (CT3) to $U_1 $ to get $C^{-1} D C U_2^{-1}$ and use (CCT2) to
convert   $C^{-1} D C U_2^{-1}$ to $D$.  Since $D$ is a cyclic permutation of $\bar U_1(k+1)$, a reference to the induction hypothesis completes the induction step in Case (F3).

Case (F4) is analogous to Case (F3) with $U_1$ and $U_2$ switched.

It remains to study Cases (F1)--(F2).

Suppose that Case (F1) holds, hence, up to cyclic permutations of the words
$\bar W_1(k+1)$,  $U_1$, $U_2$, we have
$\bar W_1(k+1) \equiv U_1 C  U_2  C^{-1}$,  where $|C| > 0$, see  Fig.~1(a).
Applying  operations (CT3) to  $U_1$, $U_2$, $\bar W_1(k+1)$  if necessary, we may assume that   $\bar W_1(k+1) \equiv U_1 C U_2 C^{-1}$.

Now we apply a sequence of  operations  (CT1), (CCT2), (CT3), (CCT4) to
the tuple   $\U$   so that the first two components of $\U$    would be changing as indicated below. Note that the addition and  deletion of the third component is done by (CCT4)  and  $x$ is a letter,  $x \not\in (\A \cup \B)^{\pm 1}$.
\begin{multline*}
( U_1, U_2)
\to  (x  C U_2 C^{-1} U_1 ,  U_1, U_2)  \to  ( U_1^{-1}  C U_2^{-1} C^{-1}  x^{-1}  ,  U_1, U_2) \to  \\
(U_1^{-1}  C U_2^{-1} C^{-1}  x^{-1}  ,   C U_2^{-1} C^{-1}  x^{-1}  , U_2)  \to  (x  C U_2 C^{-1} U_1,   x  C U_2 C^{-1} ,  U_2^{-1})  \to \\
(x  C U_2 C^{-1} U_1 ,    C^{-1}  x  C U_2, U_2^{-1}) \to
(x  C U_2 C^{-1} U_1 ,     x , U_2^{-1})  \to  \\  ( C U_2 C^{-1} U_1 x ,     x^{-1} , U_2^{-1}) \to   ( C U_2 C^{-1} U_1 ,     x^{-1} , U_2^{-1}) \to  (  U_1 C U_2 C^{-1}, U_2)   .
\end{multline*}
Thus it is shown that  $\U = \bar W(k)$ can be changed into $\bar W(k+1)$  by operations
(CT1), (CCT2), (CT3), (CCT4). A reference to the induction hypothesis completes the induction step in Case (F1).

Assume that Case (F2) holds,  hence, up to cyclic permutations of the words
$\bar W_1(k+1)$,  $U_1$, $U_2$, we have $\bar W_1(k+1) \equiv D E$, where $U_1 \equiv D P^{-1}$,
$U_2 \equiv P E$, and  $|D|, |E| >0$,  see Fig.~1(b).  Applying operations  (CT3) to  $U_1$, $U_2$, $\bar W_1(k+1)$,  if necessary, we may assume that  $\bar W_1(k+1) \equiv D E$, where $U_1 \equiv D P^{-1}$, $U_2 \equiv P E$, and  $|D|, |E| >0$.

Let us  apply a sequence of  operations  (CT1), (CCT2), (CT3), (CCT4) to
the tuple   $\U$   so that the first two components of $\U$    would be changing as indicated below. As above,    $x \not\in (\A \cup \B)^{\pm 1}$  is a new letter.
\begin{multline*}
( U_1, U_2)  =   (DP^{-1},   PE ) \to  (x  PE DP^{-1}, DP^{-1}, PE )  \to \\
(  P D^{-1} E^{-1} P^{-1} x^{-1} , DP^{-1}, PE )  \to
(P D^{-1} E^{-1} P^{-1} x^{-1} ,  E^{-1} P^{-1} x^{-1}, PE )  \to \\
(x  PE DP^{-1}, x  PE ,    E^{-1} P^{-1} )  \to (x  PE DP^{-1}, x, E^{-1} P^{-1} ) \to \\
 (PE DP^{-1}x  , x^{-1},  P E ) \to  (E D  , x,  P E ) \to   (D E  ,   P E ) = (\bar W_1(k+1)  ,   U_2 ) .
\end{multline*}
Thus  $\U = \bar W(k)$ can be changed into $\bar W(k+1)$  by operations
(CT1), (CCT2), (CT3), (CCT4). A reference to the induction hypothesis completes the induction step in Case (F2).

The induction step is now complete in all Cases (F1)--(F4)   and it is shown that  $\bar \W(k)$,  for every $k\ge 0$,  can be obtained from $\W(0)= (\R, \B)$ by a sequence of  operations (CT1), (CCT2), (CT3), (CCT4). Since $\bar \W(\ell) = \W(\ell) = (\A, \B)$, it follows that, using
operations  (CCT4), one can transform the tuple $\R$ into
$ (\R, \B)$. Then, applying  operations    (CT1), (CCT2), (CT3), (CCT4), one can get $(\A, \B)$ from   $ (\R, \B)$,  and then, using  (CCT4),  obtain  $(a_1, \dots, a_m)$  from  $(\A, \B)$.  Thus the CCAC-conjecture with  stabilizations  holds for $\R$.

Conversely, assume that the CCAC-conjecture  with  stabilizations holds for $\R$. It is easy to see that every  operation (CT1), (CCT2), (CT3), (CCT4) is a composition of (T1)--(T4). Hence, the AC-conjecture with  stabilizations also holds for $\R$.
\smallskip

(b)   As a counterexample to the CCAC-conjecture, we  use the presentation
$$
\langle  \, a, b  \,  \|  \, a^2 b^{-3}, aba b^{-1} a^{-1} b^{-1}   \, \rangle ,
$$
where $ (a^2 b^{-3}, aba  b^{-1} a^{-1} b^{-1} ) =(W_1, W_2)$ is the pair of Sect.~3. As was observed in  Sect.~3,  if $V$ is a common subword of cyclic permutations of  words $W_1^{\e_1}$ and  $W_2^{\e_2}$, where $\e_1, \e_2 = \pm 1$,  then  $|V| \le 2 < \min(|W_1|,  |W_2|) = 5$. Therefore, no operation of type (CCT2) is applicable to any pair obtained from $(W_1, W_2)$ by a sequence of operations   (CT1), (CT3).
Since operations   (CT1), (CT3) do not change the length $|W_1 | + |W_2 |$, this proves that $(W_1, W_2)$ cannot be turned into  $(a, b)$ by  operations   (CT1), (CCT2), (CT3).  Theorem~\ref{t2} is proved.
\end{proof}

\smallskip

In conclusion, we recall that the Andrews--Curtis conjecture with stabilizations
is known to hold for presentations that come from spines of the 3-sphere and it would be of interest to find out whether there is an upper bound on the number of operations (T1)--(T4) in this situation. Note that such a computable bound  for spine  presentations associated with  3-manifolds, together with the 3-dimensional Poincar\'e conjecture,
would imply a purely algebraic algorithm to recognize the 3-sphere and to detect the triviality
of spine presentations associated with 3-manifolds.  It might be the case that available algorithms for recognition of the 3-sphere, together with analysis of their computational complexity, see \cite{I01}, \cite{I08}, \cite{Mt}, \cite{Rbn}, \cite{Tmp},  would be useful towards this goal.
\smallskip

{\em Acknowledgements.} The author wishes to thank the referee
for many meticulous remarks.


\begin{thebibliography}{[11]}

\bibitem[1]{AC65}  J. J. Andrews and  M. L. Curtis,
{\em Free groups and handlebodies}, {Proc. Amer. Math. Soc.} {\bf
16}(1965), 192--195.

\bibitem[2]{AC66}  J. J. Andrews and  M. L. Curtis,
{\em Extended Nielsen operations in free groups}, {Amer. Math.
Monthly} {\bf 73}(1966), 21--28.


\bibitem[3]{BLM} A. V. Borovik, A. Lubotzky, A. G. Myasnikov, {\em
The finitary Andrews-Curtis conjecture},  Infinite groups: geometric, combinatorial and dynamical aspects,
Progr. Math., {\bf 248}, Birkh\"auser, Basel, 2005,  15--30.

\bibitem[4]{BM98}  R. G. Burns and O. Macedonska,
{\em Balanced presentations of the trivial group}, {Bull. London
Math. Soc.} {\bf 25}(1993), 513--526.

\bibitem[5]{HR}  G. Havas and  C. Ramsay, {\em  Breadth-first search and the Andrews--Curtis conjecture}, Internat. J.  Algebra Comput. {\bf 13}(2003), 61--68.

\bibitem[6]{HW93}  C. Hog-Angeloni and W. Metzler,
{\em  The Andrews--Curtis conjecture and its generalizations},
{London Math. Soc. Lect. Notes Ser.} {\bf 197}(1993), 365--380.


\bibitem[7]{I94}  S. V. Ivanov,  {\em The free Burnside groups of sufficiently
large exponents}, {Internat. J. Algebra Comp.}  {\bf 4}(1994), 1--308.

\bibitem[8]{I01}  S. V. Ivanov,  {\em  Recognizing the 3-sphere}, Illinois J. Math.
{\bf 49}(2001), 1073--1117.

\bibitem[9]{I05}  S. V. Ivanov, {\em On Rourke's extension of group
presentations and a cyclic version of the Andrews--Curtis
conjecture},  {Proc. Amer. Math. Soc.} {\bf 134}(2006),
1561--1567.

\bibitem[10]{I06}  S. V. Ivanov,
{\em On balanced presentations of the trivial group}, {Invent.
Math.}  {\bf 165}(2006), 525--549.

\bibitem[11]{I08} S. V. Ivanov,  {\em The computational complexity of basic
decision problems in 3-dimensional topology},  Geom. Dedicata {\bf 131}(2008), 1--26.

\bibitem[12]{Lu}  M. Lustig,  {\em Nielsen equivalence and
simple-homotopy type}, {Proc. London Math. Soc.} {\bf 62}(1991),
537--562.

\bibitem[13]{LS} R. C. Lyndon and P. E. Schupp, {\em Combinatorial
group theory}, Springer-Verlag, 1977.

\bibitem[14]{MKS}   W. Magnus,  J.  Karras,  D. Solitar,
{\em Combinatorial group theory}, Interscience Publ., 1966.

\bibitem[15]{Mt}  S. V. Matveev,  {\em The algorithm for recognition of three-dimensional sphere (after A. Thompson)}, Matem. Sbornik {\bf 186}(1995), 69--84.

\bibitem[16]{Ma}     A. G. Myasnikov, {\em Extended Nielsen transformations and the trivial group}, Math.
Notes {\bf 35}(1984), 258--261.

\bibitem[17]{M}  A. D. Myasnikov, {\em  Genetic algorithms and the Andrews--Curtis conjecture}, Internat. J.  Algebra and Comput. {\bf  9}(1999),  671--686.

\bibitem[18]{MMS02}  A. D. Myasnikov, A. G. Myasnikov, and V. Shpilrain,
{\em On  the Andrews--Curtis equivalence}, Contemp. Math. {\bf
296}(2002), 183--198.

\bibitem[19]{Ol} A. Yu. Ol'shanskii,  {\em Geometry of defining relations in
groups}, Nauka, Moscow, 1989; English translation: {\em Math.
and Its Applications, Soviet series},  vol. 70, Kluwer Acad.
Publ., 1991.

\bibitem[20]{P1} G. Perelman,  {\em The entropy formula for the Ricci flow and its geometric applications}, arXiv.org, November 11, 2002.

\bibitem[21]{P2} G. Perelman, {\em Ricci flow with surgery on three-manifolds}, arXiv.org, March 10, 2003.

\bibitem[22]{P3}  G. Perelman, {\em Finite extinction time for the solutions to the Ricci flow on certain three-manifolds}, arXiv.org, July 17, 2003.

\bibitem[23]{R1}   E. S. Rapaport, {\em  Remarks on groups of order 1},  Amer. Math. Monthly {\bf 75}(1968), 714--720.

\bibitem[24]{R2}   E. S.  Rapaport, {\em  Groups of order 1. Some properties of presentations}, Acta Math. {\bf 120}(1968), 127--150.

\bibitem[25]{Rbn}  J. H. Rubinstein, {\em  An algorithm to recognize the 3-sphere}, Proc.  Internat. Congress of Mathematicians, vol. 1, 2, Zurich, 1994, pp. 601--611.

\bibitem[26]{Tmp}   A. Thompson, {\em  Thin position and the recognition problem for $S^3$}, Math. Res. Lett. {\bf 1}(1994), 613--630.

\bibitem[27]{W}  P. Wright, {\em  Group presentations and formal deformations}, Trans. Amer. Math. Soc. {\bf 208}(1975), 161--169.

\end{thebibliography}
\end{document}